\numberwithin{equation}{section}
\begin{document}
\baselineskip=16pt

\title{Kirchhoff index, multiplicative degree-Kirchhoff index and spanning trees of the linear crossed polyomino chains\footnote{This work was supported by the National Natural Foundation of China [61773020]. Corresponding author: Yingui Pan(panygui@163.com)
 }
}

\author{Yingui Pan$^{a}$,   Jianping Li$^{a}$   \\
	\small  $^{a}$College of Liberal Arts and Sciences, National University of Defense Technology, \\
	\small  Changsha, China, 410073.\\
}

\date{\today}

\maketitle

\begin{abstract}
Let $G_n$ be a linear crossed polyomino chain with $n$ four-order complete graphs. In this paper, explicit formulas for the Kirchhoff index, the multiplicative degree-Kirchhoff index and the number of spanning trees of $G_n$ are determined, respectively. It is interesting to find that the Kirchhoff (resp. multiplicative degree-Kirchhoff) index of $G_n$ is approximately one quarter of its Wiener (resp. Gutman) index. More generally, let $\mathcal{G}^r_n$ be the set of subgraphs obtained by deleting $r$ vertical edges of $G_n$, where $0\leqslant r\leqslant n+1$. For any graph $G^r_n\in \mathcal{G}^r_{n}$, its Kirchhoff index and  number of spanning trees are completely determined, respectively. Finally, we show that the Kirchhoff index of $G^r_n$ is approximately one quarter of its Wiener index. 
\end{abstract}



\section{Introduction}
Let $G=(V_G,E_G)$ be a simple connected graph with vertex set $V_G=\{v_1,v_2,\ldots,v_n\}$ and edge set $E_G$. The adjacent matrix $A(G)=(a_{ij})_{n\times n}$ is a $(0,1)$-matrix  such that $a_{ij}=1$ if and only if vertices $v_i$ and $v_j$ are adjacent. Let $D(G)=\textrm{diag}(d_{1},d_{2},\ldots,d_{n})$ be the diagonal matrix of vertex degrees, where $d_{i}$ is the degree of $v_i$  in $G$ for $1\leqslant i\leqslant n$. Then the  Laplacian matrix of $G$ is defined as ${L}(G)=D(G)-A(G)$.

The traditional distance between vertices $v_i$ and $v_j$, denoted by $d_{ij}$, is the length of a shortest path connecting them. Distance is an important invariant in graph theory and derives many distance-based invariants. One famous distance-based graph invariant of $G$ is the Wiener index \cite{020}, $W(G)$, which is the sum of distances between pairs of vertices in $G$, namely $W(G)=\sum_{i<j}d_{ij}$. The Gutman index of $G$ was defined as $\textrm{Gut}(G)=\sum_{i<j}d_id_jd_{ij}$ by Gutman in \cite{000e}. When $G$ is a tree of order $n$, he \cite{000e} showed that $\textrm{Gut}(G)=4W(G)-(2n-1)(n-1)$.

Based on the electronic network theory, Klein and Randi$\acute{c}$\cite{013}  introduced a new distance-based parameter, namely the resistance distance, on a graph. The resistance distance $r_{ij}$ is the effective resistance between vertices $v_i$ and $v_j$ when one puts one unit resistor on every edge of a graph $G$. This parameter is intrinsic to the graph and has many  applications in theoretical chemistry.  As an analogue to the Wiener index, one may define  $K\!f(G)=\sum_{i<j}r_{ij}$, which is known as the Kirchhoff index \cite{013} of $G$. Klein and Randi$\acute{c}$  \cite{013} found that $K\!f(G)\leqslant{W}(G)$ with equality if and only if $G$ is a tree. For an $n$-vertex connected graph $G$, Klein \cite{011} and Lov$\acute{a}$sz \cite{015}, independently, obtained that 
\begin{flalign}\label{eq11}
	&\hspace{1cm}{K\!f}(G)=n\sum^{n}_{i=2}\frac{1}{\mu_i}.&
\end{flalign}	
where $0=\mu_1<\mu_2\leqslant\cdots\leqslant\mu_n$ $(n\geqslant2)$ are the eigenvalues of $L(G)$.

Recently, the normalized Laplacian has attracted increasing attention because some results which were only known for regular graphs can be spread to all graphs. The normalized Laplacian matrix of $G$ is defined to be  $\mathcal{L}(G)=D(G)^{-\frac{1}{2}}L(G)D(G)^{-\frac{1}{2}}$. It should be stressed that ${(d_{i})}^{-\frac{1}{2}}=0$ for the degree of vertex $v_i$ in $G$ is $0$  \cite{006}. Therefore, one can easily verify that
\begin{flalign}
	&\hspace{1cm}(\mathcal{L}(G))_{ij}=\left\{ 
	\begin{array}{ll}
		{1,} &\textrm{if $i=j$};\\
		{-\frac{1}{\sqrt{d_{i}d_{j}}}}, &\textrm{if $i\ne j$ and $v_i\sim v_j$};\\
		{0,} & \textrm{otherwise}.\\
	\end{array} 
	\right. &
\end{flalign}

In 2007, Chen and Zhang \cite{005}  proposed a new resistance distance-based graph invariant, defined by $K\!f^{*}(G)=\sum_{i<j}d_{i}d_{j}r_{ij}$, which is called the multiplicative degree-Kirchhoff index (see  \cite{007,008}. This index is closely related to the spectrum of the normalized Laplacian matrix $\mathcal{L}(G)$. For an $n$-vertex connected graph $G$ with $m$ edges, Chen and Zhang \cite{005} showed that
\begin{flalign}\label{eq111}
	&\hspace{1cm}{K\!f}^{*}(G)=2m\sum^{n}_{i=2}\frac{1}{\lambda_i}.&
\end{flalign}	
where $0=\lambda_1<\lambda_2\leqslant\cdots\leqslant\lambda_n$ $(n\geqslant2)$ are the eigenvalues of $\mathcal{L}(G)$.	

Up to now, closed formulas for the  Kirchhoff index and the multiplicative degree-Kirchhoff index have been given for some linear chains, such as cycles \cite{012}, ladder graphs \cite{cin}, ladder-like chains \cite{004}, liner phenylenes \cite{peng,zhu}, linear polyomino chains \cite{010,022}, linear pentagonal chains \cite{he,ywang}, linear hexagonal chains \cite{009,022} and linear crossed hexagonal chains \cite{pan}. Some other topics on the Kirchhoff index  and the multiplicative degree-Kirchhoff index of a graph  may be referred to \cite{001,002,003,cle1,cle2,014,016,017,018,019,021,023,024,025,026} and references therein.

\begin{figure}[htbp]
	\centering 
	\includegraphics[height=2.      in, width=3.7   in,angle=0]{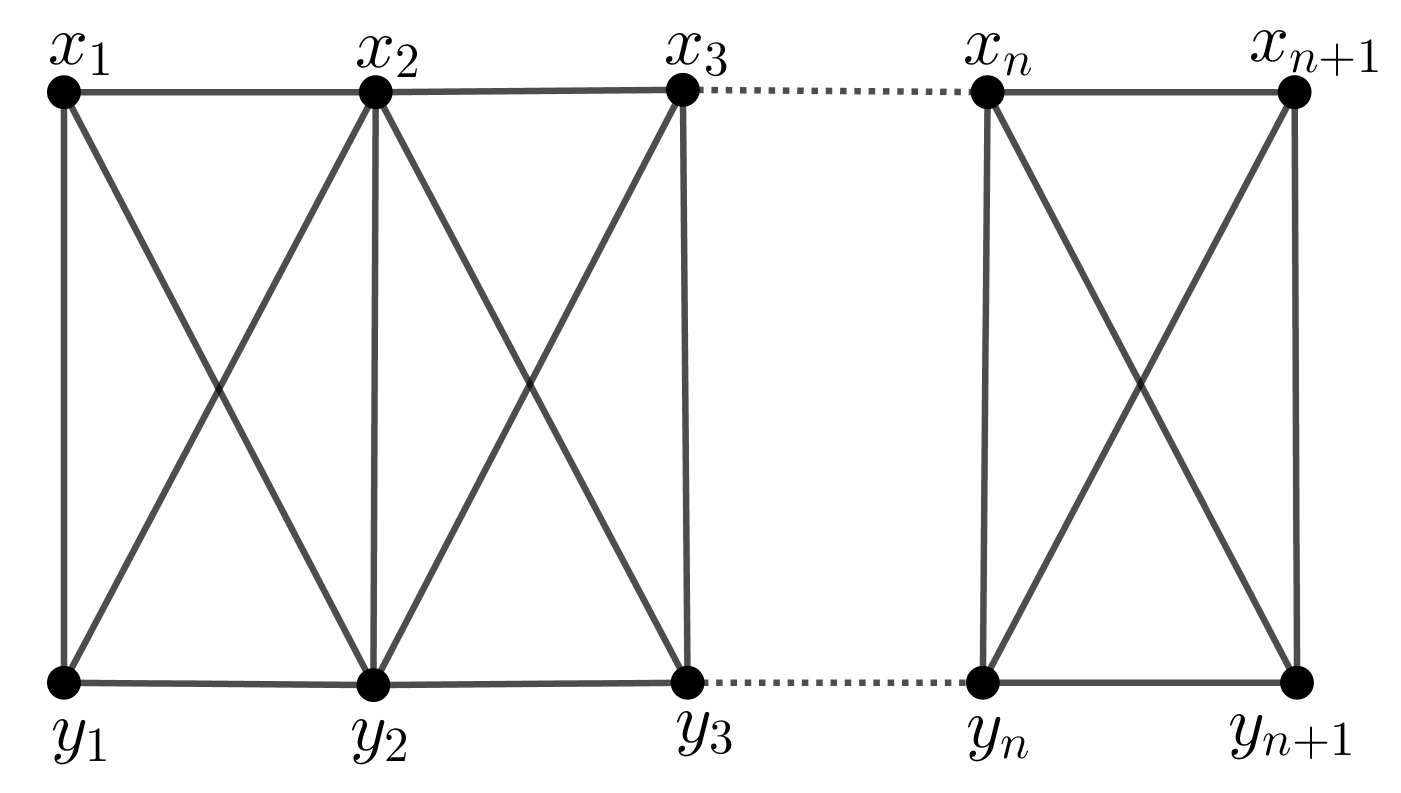}
	\caption{The linear crossed polyomino chain $G_n$ with some labelled vertices. }
	\label{fig01}
\end{figure}
Let $G_n$ be a linear crossed polyomino chain with $n$ four-order complete graphs as depicted in Fig. 1. Then it is routine to check that $|V_{G_n}|=2n+2$ and $|E_{G_n}|=5n+1$. Let $E^{\prime}$ be the set of vertical edges of $G_n$, where $E^{\prime}=\{ii^{\prime}:i=1,2,\dots,n+1\}$.     Let $\mathcal{G}^r_n$ be the set of subgraphs obtained by deleting $r$ vertical edges from $G_n$, where $0\leqslant r\leqslant n+1$. Obviously, $\mathcal{G}^0_n=\{G_n\}$.

In this paper, explicit formulas for the Kirchhoff index, the multiplicative degree-Kirchhoff index and the number of spanning trees of $G_n$ are determined, respectively. We are surprised to find that the Kirchhoff (resp. multiplicative degree-Kirchhoff) index of $G_n$ is approximately one quarter of its Wiener (resp. Gutman) index. More generally, for any graph $G^r_n\in \mathcal{G}^r_{n}$,  its Kirchhoff index and  number of spanning trees are completely determined, respectively. Furthermore, the Kirchhoff index of $G^r_n$ is found to be approximately one quarter of its Wiener index.

\section{Preliminaries}
In this paper, we denote by $\Phi(B)=\det(xI-B)$ the characteristic polynomial of a square matrix $B$. Let $V_1=\{1,2,\ldots,n+1\}$ and $V_2=\{1^\prime,2^\prime,\ldots,(n+1)^\prime\}$. Then $L(G_n)$ and $\mathcal{L}(G_n)$ can be written as the following two block matrices
\begin{flalign*}
	&\hspace{1cm}{L}(G_n)=\left(
	\begin{array}{cc}
		{L}_{{11}} & {L}_{{12}} \\
		{L}_{{21}} & {L}_{{22}} 
	\end{array}
	\right),
	&
	&\mathcal{L}(G_n)=\left(
	\begin{array}{cc}
		\mathcal{L}_{{11}} & \mathcal{L}_{{12}} \\
		\mathcal{L}_{{21}} & \mathcal{L}_{{22}} 
	\end{array}
	\right),
	&
\end{flalign*}
where ${L}_{{ij}}$ and $\mathcal{L}_{{ij}}$ are the submatrices formed by rows corresponding to vertices in $V_i$ and columns corresponding to vertices in $V_j$ respectively, for $i,j=1,2$. It is easy to check that $L_{11}=L_{22}$, ${L}_{12}=L_{21}$, $\mathcal{L}_{11}=\mathcal{L}_{22}$ and $\mathcal{L}_{12}=\mathcal{L}_{21}$.

Let
\begin{flalign*}
	&\hspace{1cm}T=\left(
	\begin{array}{cc}
		\frac{1}{\sqrt2}I_{n+1} & \frac{1}{\sqrt2}I_{n+1} \\
		\frac{1}{\sqrt2}I_{n+1} & -\frac{1}{\sqrt2}I_{n+1} 
	\end{array}
	\right),
	&
\end{flalign*}
then we have 
\begin{flalign*}
	&\hspace{1cm}T{L}(G_n)T=\left(
	\begin{array}{cc}
		{L}_{A} & 0 \\
		0 & {L}_{S} 
	\end{array}
	\right),
	&
	&T\mathcal{L}(G_n)T=\left(
	\begin{array}{cc}
		\mathcal{L}_{A} & 0 \\
		0 & \mathcal{L}_{S} 
	\end{array}
	\right),
	&
\end{flalign*}
where ${L}_A={L}_{{11}}+{L}_{{12}}$, ${L}_S={L}_{{11}}-{L}_{{12}}$,  $\mathcal{L}_A=\mathcal{L}_{{11}}+\mathcal{L}_{{12}}$ and $\mathcal{L}_S=\mathcal{L}_{{11}}-\mathcal{L}_{{12}}$.

Then similar to the decomposition theorem obtained in \cite{022}, we can get the following decomposition theorem.

\newtheorem{mytheo}{Theorem}[section]
\begin{mytheo}
	\label{lem21}
	Let ${L}_A$, ${L}_S$,  $\mathcal{L}_A$ and $\mathcal{L}_S$ be defined as above. Then
	\begin{flalign*}
		&\hspace{1cm}\Phi(L(G_n))={\Phi({L}_A)}\cdot{\Phi({L}_S)},&  &\Phi(\mathcal{L}(G_n))={\Phi(\mathcal{L}_A)}\cdot{\Phi(\mathcal{L}_S)}.&
	\end{flalign*}
\end{mytheo}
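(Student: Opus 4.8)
The plan is to exploit the block structure of $L(G_n)$ together with the orthogonal similarity transformation $T$ already introduced. The key observation is that $T$ is a symmetric orthogonal matrix, i.e.\ $T^{-1}=T^{\top}=T$, which one checks directly since $T^2=I_{2(n+1)}$. Because $L_{11}=L_{22}$ and $L_{12}=L_{21}$, conjugating $L(G_n)$ by $T$ block-diagonalizes it: a direct multiplication of the three block matrices gives
\begin{flalign*}
	&\hspace{1cm}TL(G_n)T=\begin{pmatrix} L_{11}+L_{12} & 0 \\ 0 & L_{11}-L_{12}\end{pmatrix}=\begin{pmatrix} L_A & 0 \\ 0 & L_S\end{pmatrix},&
\end{flalign*}
where the off-diagonal blocks vanish precisely on account of the equalities $L_{11}=L_{22}$ and $L_{12}=L_{21}$. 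The same computation, verbatim with $\mathcal{L}$ in place of $L$, yields the block-diagonal form for $\mathcal{L}(G_n)$, using $\mathcal{L}_{11}=\mathcal{L}_{22}$ and $\mathcal{L}_{12}=\mathcal{L}_{21}$.

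First I would record the elementary fact that similar matrices share the same characteristic polynomial: since $T$ is invertible, $\Phi(L(G_n))=\det\!\big(xI-L(G_n)\big)=\det\!\big(T(xI-L(G_n))T\big)=\det\!\big(xI-TL(G_n)T\big)$, using $T^2=I$ and multiplicativity of the determinant. Next I would invoke the standard fact that the determinant of a block-diagonal matrix factors as the product of the determinants of its diagonal blocks. Applying this to the block-diagonal form computed above gives
\begin{flalign*}
	&\hspace{1cm}\Phi(L(G_n))=\det\begin{pmatrix} xI_{n+1}-L_A & 0 \\ 0 & xI_{n+1}-L_S\end{pmatrix}=\Phi(L_A)\cdot\Phi(L_S),&
\end{flalign*}
and identically $\Phi(\mathcal{L}(G_n))=\Phi(\mathcal{L}_A)\cdot\Phi(\mathcal{L}_S)$.

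The only genuine content, and the step I would treat most carefully, is verifying that $T$ is orthogonal and that the off-diagonal blocks of $TL(G_n)T$ really do cancel; this is where the symmetry hypotheses $L_{11}=L_{22}$ and $L_{12}=L_{21}$ (and their normalized analogues) are consumed. These equalities were already asserted in the excerpt from the bipartite-like mirror symmetry of $G_n$ between $V_1$ and $V_2$, so I would simply cite them. Everything else is a routine determinant manipulation, so I do not anticipate a substantial obstacle; the proof is essentially the same as the decomposition argument in \cite{022}, adapted to the present block sizes $n+1$.
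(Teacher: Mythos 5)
Your proposal is correct and follows essentially the same route as the paper: conjugate by the symmetric involution $T$ (so $(\det T)^2=1$), use the symmetries $L_{11}=L_{22}$, $L_{12}=L_{21}$ to block-diagonalize, and factor the determinant of the resulting block-diagonal matrix. The only cosmetic difference is that you verify the off-diagonal cancellation inside the proof, whereas the paper records the block-diagonal form of $TL(G_n)T$ in the preliminaries and cites it.
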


\begin{proof}
	Note that $T^2=I_{2n+2}$, thus $(\det T)^2=1$. Then we have
	\begin{flalign*}
		\hspace{1cm}\Phi(L(G_n))&= \det(xI_{2n+2}-{L}(G_n))
		\nonumber& \\
		&=\det{T}\cdot 
		\det(xI_{2n+2}-{L}(G_n))\cdot{\textrm{det}{T}}\nonumber&\\
		&=\det(xI_{2n+2}-T{L}(G_n)T)\nonumber&\\
		&=\det\left(
		\begin{array}{cc}
			xI_{n+1}-{L}_{A} & \textbf{0} \\
			\textbf{0} & xI_{n+1}-{L}_{S} 
		\end{array}
		\right)\nonumber&\\
		&={\Phi({L}_A)}\cdot{\Phi({L}_S)}.&
	\end{flalign*}
	
	Similarly, we have $\Phi(\mathcal{L}(G_n))={\Phi(\mathcal{L}_A)}\cdot{\Phi(\mathcal{L}_S)}.$
\end{proof}


\begin{mytheo}\label{theo22}
	$(\cite{006}).$
	Let $G$ be a connected graph of order $n$, then $\tau(G)=\frac{1}{n}\prod^n_{i=2}\mu_i$, where $\tau(G)$ is the number of spanning trees of $G$.	
\end{mytheo}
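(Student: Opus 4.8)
The plan is to link the combinatorial quantity $\tau(G)$ to the spectrum of $L(G)$ through Kirchhoff's classical Matrix--Tree Theorem, which asserts that every diagonal cofactor of the Laplacian matrix equals $\tau(G)$. Since $L(G)$ is a real symmetric positive semidefinite matrix whose rows sum to zero, it has $0$ as an eigenvalue with eigenvector $\mathbf{1}$, and for connected $G$ this eigenvalue is simple; this is exactly why $\mu_1=0<\mu_2$ in the ordering given above. The strategy is then to read off one particular coefficient of the characteristic polynomial $\Phi(L(G))=\det(xI_n-L(G))$ in two different ways: once from its factorization over the eigenvalues, and once from the expansion of the determinant in terms of principal minors.

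First I would use $\mu_1=0$ to write
\[
\Phi(L(G))=\prod_{i=1}^{n}(x-\mu_i)=x\prod_{i=2}^{n}(x-\mu_i).
\]
The coefficient of $x^1$ in this product is $(-1)^{n-1}e_{n-1}(\mu_1,\ldots,\mu_n)$, where $e_{n-1}$ denotes the $(n-1)$-st elementary symmetric polynomial. Every term of $e_{n-1}$ omits exactly one variable, and all terms except $\mu_2\mu_3\cdots\mu_n$ still contain the factor $\mu_1=0$; hence this coefficient reduces to $(-1)^{n-1}\prod_{i=2}^{n}\mu_i$.

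Next I would compute the same coefficient from the matrix side. A standard identity for characteristic polynomials states that the coefficient of $x^{m}$ in $\det(xI_n-L(G))$ equals $(-1)^{n-m}$ times the sum of all $(n-m)\times(n-m)$ principal minors of $L(G)$. Taking $m=1$, the coefficient of $x^1$ is $(-1)^{n-1}$ times the sum of the $n$ principal $(n-1)\times(n-1)$ minors, each obtained by deleting one matching row and column. Each such principal minor is precisely a diagonal cofactor of $L(G)$, so by the Matrix--Tree Theorem every one of them equals $\tau(G)$. Therefore the sum is $n\,\tau(G)$, and the coefficient of $x^1$ equals $(-1)^{n-1}n\,\tau(G)$.

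Equating the two expressions for the coefficient of $x^1$ gives $\prod_{i=2}^{n}\mu_i=n\,\tau(G)$, and dividing by $n$ yields the claimed formula. The genuine content here --- and the step I expect to be the main obstacle --- is the Matrix--Tree Theorem itself, namely that each diagonal cofactor of $L(G)$ counts spanning trees; the cleanest route is to factor a reduced Laplacian as $B_0B_0^{\top}$ for an oriented incidence matrix $B_0$ and apply the Cauchy--Binet formula, in which each nonzero term corresponds exactly to a spanning tree. Everything after that is routine symmetric-function bookkeeping, which is why the result can simply be cited as in \cite{006}.
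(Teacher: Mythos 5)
Your proposal is correct; note, however, that the paper does not prove this statement at all --- it is quoted as a known result from the cited reference \cite{006}, so there is no in-paper argument to compare against. The argument you give (Matrix--Tree Theorem for the diagonal cofactors, then equating the coefficient of $x^1$ in $\det(xI_n-L(G))$ computed once from the eigenvalue factorization using $\mu_1=0$ and once from the sum of principal $(n-1)\times(n-1)$ minors, yielding $\prod_{i=2}^{n}\mu_i=n\,\tau(G)$) is the standard and complete derivation, with the only nontrivial ingredient being the Matrix--Tree Theorem itself, which you correctly identify and for which the Cauchy--Binet route via an oriented incidence matrix is the usual proof.
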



\section{Kirchhoff index and multiplicative degree-Kirchhoff index  of $G_n$} 
In this section,  we will determine the Laplacian eigenvalues and the normalized Laplacian eigenvalues of $G_n$ according to  Theorem \ref{lem21}, respectively. Next we will provide a complete description for the sum of the Laplacian (resp. normalized Laplacian) eigenvalues' reciprocals which will be used in calculating the Kirchhoff (resp. multiplicative degree-Kirchhoff) index of $G_n$. Finally, we show that the Kirchhoff (resp. multiplicative degree-Kirchhoff) index of $G_n$ is approximately one quarter of its Wiener (resp. Gutman) index.

\begin{flushleft}
	3.1 Kirchhoff index of $G_n$ 
\end{flushleft}
Note that
\begin{flalign*}
	&\hspace{1cm}{L}_{{11}}=\left(
	\begin{array}{cccccccccc}
		3& -1& 0 & 0& 0& \dots& 0& 0& 0& 0 \\
		-1& 5& -1 & 0& 0& \dots& 0& 0& 0& 0\\
		0& -1 & 5& -1 &  0& \dots& 0& 0& 0& 0\\
		0& 0& -1 &5& -1 &   \dots& 0& 0& 0& 0\\
		0& 0& 0& -1 & 5&    \dots& 0& 0& 0& 0\\
		\vdots& \vdots& \vdots& \vdots& \vdots& \ddots& \vdots& \vdots& \vdots& \vdots\\
		0& 0& 0& 0 & 0&    \dots& 5& -1& 0& 0\\
		0& 0& 0& 0 & 0&    \dots& -1& 5& -1& 0\\
		0& 0& 0& 0 & 0&    \dots& 0& -1& 5& -1\\
		0& 0& 0& 0 & 0&    \dots& 0& 0& -1& 3
	\end{array}
	\right)_{(n+1)\times(n+1)}&
\end{flalign*}
and
\begin{flalign*}
	&\hspace{1cm}{L}_{{12}}=\left(
	\begin{array}{cccccccccc}
		-1& -1& 0 & 0& 0& \dots& 0& 0& 0& 0 \\
		-1& -1& -1 & 0& 0& \dots& 0& 0& 0& 0\\
		0& -1& -1& -1 &  0& \dots& 0& 0& 0& 0\\
		0& 0& -1 & -1& -1 &   \dots& 0& 0& 0& 0\\
		0& 0& 0& -1 & -1&    \dots& 0& 0& 0& 0\\
		\vdots& \vdots& \vdots& \vdots& \vdots& \ddots& \vdots& \vdots& \vdots& \vdots\\
		0& 0& 0& 0 & 0&    \dots& -1& -1& 0& 0\\
		0& 0& 0& 0 & 0&    \dots& -1& -1& -1& 0\\
		0& 0& 0& 0 & 0&    \dots& 0& -1& -1& -1\\
		0& 0& 0& 0 & 0&    \dots& 0& 0& -1& -1
	\end{array}
	\right)_{(n+1)\times(n+1)},&
\end{flalign*}
then
\begin{flalign*}
	&\hspace{1cm}{L}_{A}=\left(
	\begin{array}{cccccccccc}
		2& -2& 0 & 0& 0& \dots& 0& 0& 0& 0 \\
		-2& 4& -2 & 0& 0& \dots& 0& 0& 0& 0\\
		0& -2 & 4& -2 &  0& \dots& 0& 0& 0& 0\\
		0& 0& -2 & 4& -2 &   \dots& 0& 0& 0& 0\\
		0& 0& 0& -2 & 4&    \dots& 0& 0& 0& 0\\
		\vdots& \vdots& \vdots& \vdots& \vdots& \ddots& \vdots& \vdots& \vdots& \vdots\\
		0& 0& 0& 0 & 0&    \dots& 4& -2& 0& 0\\
		0& 0& 0& 0 & 0&    \dots& -2& 4& -2& 0\\
		0& 0& 0& 0 & 0&    \dots& 0& -2& 4& -2\\
		0& 0& 0& 0 & 0&    \dots& 0& 0& -2& 2
	\end{array}
	\right)_{(n+1)\times(n+1)}&
\end{flalign*}
and 
\begin{flalign*}
	&\hspace{1cm}{L}_{S}=\left(
	\begin{array}{cccccccccc}
		4& 0& 0 & 0& 0& \dots& 0& 0& 0& 0 \\
		0&  6 & 0& 0& 0& \dots& 0& 0& 0& 0\\
		0& 0 & 6& 0 &  0& \dots& 0& 0& 0& 0\\
		0& 0& 0 & 6& 0 &   \dots& 0& 0& 0& 0\\
		0& 0& 0& 0 & 6&    \dots& 0& 0& 0& 0\\
		\vdots& \vdots& \vdots& \vdots& \vdots& \ddots& \vdots& \vdots& \vdots& \vdots\\
		0& 0& 0& 0 & 0&    \dots& 6& 0& 0& 0\\
		0& 0& 0& 0 & 0&    \dots& 0& 6& 0& 0\\
		0& 0& 0& 0 & 0&    \dots& 0& 0& 6& 0\\
		0& 0& 0& 0 & 0&    \dots& 0& 0& 0& 4
	\end{array}
	\right)_{(n+1)\times(n+1)}.&
\end{flalign*}

Suppose that the eigenvalues of $L_A$ and $L_S$ are respectively, denoted by $\alpha_i$ ($i=1,2,\dots,n+1$) and $\beta_j$ ($j=1,2,\dots,n+1$) with $\alpha_1\leqslant\alpha_2\leqslant\cdots\leqslant\alpha_{n+1}$ and $\beta_1\leqslant\beta_2\leqslant\cdots\leqslant\beta_{n+1}$. By Theorem \ref{lem21}, the spectrum of $G_n$ is   $\{\alpha_1,\alpha_2,\ldots,\alpha_{n+1},\beta_1,\beta_2,\ldots,\beta_{n+1}\}$. Note that $L_A=2L(P_{n+1})$, then it follows from  \cite{ande} that the eigenvalues of $L_A$ are $\{\alpha_i=8\sin^2[\frac{\pi(i-1)}{2(n+1)}]:i=1,2,\dots,n+1\}$. According to the results obtained in \cite{pan}, we have
\begin{flalign}\label{eq3111}
	&\hspace{1cm}\sum^{n+1}_{i=2}\frac{1}{\alpha_i}=\frac{n(n+2)}{12},&
	&\hspace{1.8cm}\prod^{n+1}_{i=2}{\alpha_i}=(n+1)2^{n}.&
\end{flalign}	
On the other hand, since $L_S$ is a diagonal matrix, thus $\beta_1=\beta_2=4$ and $\beta_3=\beta_4=\cdots=\beta_{n}=\beta_{n+1}=6$. Note that $|V_{G_n}|=2(n+1)$, then we can get the following theorem.

\newtheorem{kkk}{Theorem}[section]
\begin{kkk}
	Let $G_n$ be a linear crossed polyomino chain with $n$ four-order complete graphs. Then
	\begin{flalign*}
		&\hspace{1cm}K\!f(G_n)=\frac{(n+1)(n+2)^2}{6}.&
	\end{flalign*}
\end{kkk}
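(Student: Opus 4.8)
The plan is to compute $K\!f(G_n)$ directly from formula \eqref{eq11}, which expresses the Kirchhoff index as the number of vertices times the sum of reciprocals of the nonzero Laplacian eigenvalues. Since $|V_{G_n}|=2(n+1)$, I have
\[
K\!f(G_n)=2(n+1)\sum_{\mu\neq 0}\frac{1}{\mu},
\]
the sum ranging over all nonzero eigenvalues of $L(G_n)$. By Theorem \ref{lem21} the spectrum of $L(G_n)$ is the disjoint union of the spectra of $L_A$ and $L_S$, so the task is simply to organize this reciprocal sum according to the two blocks.

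First I would pin down which eigenvalue is zero. The only zero eigenvalue of $L(G_n)$ is $\alpha_1=0$ arising from $L_A$; indeed $L_S$ is diagonal with entries $4$ and $6$, hence strictly positive, so it contributes no zero eigenvalue. Consequently the nonzero spectrum of $L(G_n)$ consists of $\alpha_2,\dots,\alpha_{n+1}$ together with the entire spectrum $\beta_1,\dots,\beta_{n+1}$ of $L_S$. The $L_A$-contribution is already furnished by \eqref{eq3111}, namely $\sum_{i=2}^{n+1}\frac{1}{\alpha_i}=\frac{n(n+2)}{12}$.

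Next I would evaluate the $L_S$-contribution by reading off the diagonal: exactly two eigenvalues equal $4$ (the first and last entries) and the remaining $n-1$ equal $6$, so
\[
\sum_{j=1}^{n+1}\frac{1}{\beta_j}=2\cdot\frac{1}{4}+(n-1)\cdot\frac{1}{6}=\frac{n+2}{6}.
\]
Adding the two contributions and factoring out $n+2$ gives $\frac{n(n+2)}{12}+\frac{n+2}{6}=(n+2)\cdot\frac{n+2}{12}=\frac{(n+2)^2}{12}$, and multiplying by $2(n+1)$ yields $\frac{(n+1)(n+2)^2}{6}$, as claimed.

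There is no genuine obstacle here: all the spectral data have been assembled before the statement, and the proof reduces to a short bookkeeping of eigenvalue multiplicities followed by a routine summation. The only point requiring care is to confirm that $L_S$ contributes no zero eigenvalue, so that the single zero of $L(G_n)$ is excluded exactly once in the reciprocal sum; this is immediate from the diagonal form of $L_S$.
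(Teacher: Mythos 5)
Your proof is correct and follows essentially the same route as the paper: apply formula \eqref{eq11} with $|V_{G_n}|=2(n+1)$, take the $L_A$-contribution $\frac{n(n+2)}{12}$ from \eqref{eq3111}, read the $L_S$-contribution $\frac{n+2}{6}$ off the diagonal, and sum. The extra remark that $L_S$ contributes no zero eigenvalue is a sensible (if implicit in the paper) point of care, but otherwise the arguments coincide.
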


\begin{proof}
	Note that $|V_{G_n}|=2(n+1)$, then by \eqref{eq11} and \eqref{eq3111}, we have
	\begin{flalign*}
		\hspace{1cm}K\!f(G_n)&=2(n+1)\bigg(\sum^{n+1}_{i=2}\frac{1}{\alpha_i}+\sum^{n+1}_{j=1}\frac{1}{\beta_j}\bigg)\nonumber&\\
		&=2(n+1)\bigg(\frac{n(n+2)}{12}+\frac{n+2}{6}\bigg)\nonumber&\\
		&=\frac{(n+1)(n+2)^2}{6}&
	\end{flalign*}	
	as desired.
\end{proof}

Kirchhoff indices of linear crossed polyomino chains from $G_1$ to $G_{50}$ are listed in Table 1.

Now, we show that the Kirchhoff index of $G_n$ is approximately one quarter of its Wiener index.

\begin{kkk}
	Let $G_n$ be a linear crossed polyomino chain with $n$ four-order complete graphs. Then 
	\begin{flalign*}
		&\hspace{1cm}\lim_{n\to\infty}\frac{K\!f(G_n)}{W(G_n)}=\frac{1}{4}.&
	\end{flalign*}	
\end{kkk}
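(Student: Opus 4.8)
The plan is to combine the exact value $K\!f(G_n)=\frac{(n+1)(n+2)^2}{6}$ from the previous theorem with a matching closed form for the Wiener index $W(G_n)$; the limit then collapses to a ratio of leading coefficients of two cubic polynomials in $n$. So essentially all the work is in evaluating $W(G_n)$, and for that the first task is to pin down the distance function of $G_n$.

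I would exploit the layered structure. Label the two rows so that vertices $i$ and $i'$ both sit in column $i$ for $1\leqslant i\leqslant n+1$, and assign to every vertex its column index. Every edge of $G_n$—a horizontal edge, a vertical edge $ii'$, or one of the two diagonals of a cell—changes the column index by at most one, so $d_{uv}\geqslant|\,\text{col}(u)-\text{col}(v)\,|$ for all pairs. The diagonals then show this bound is attained: for two top (or two bottom) vertices a horizontal path of length $|i-j|$ works, and for a top vertex $i$ and a bottom vertex $j'$ with $i\neq j$ the diagonal step $i\to(i+1)'$ (or $i\to(i-1)'$) followed by a horizontal run realizes $|i-j|$. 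The only exception is the vertical pair $\{i,i'\}$, whose distance is $1$, not $0$. Thus $d_{ij}=|i-j|$ for same-row pairs, $d_{ij'}=|i-j|$ when $i\neq j$, and $d_{ii'}=1$.

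Next I would compute $W(G_n)$ by splitting the sum into top–top, bottom–bottom and top–bottom contributions. Each same-row block contributes $\sum_{1\leqslant i<j\leqslant n+1}(j-i)=\binom{n+2}{3}$, and the top–bottom block contributes $2\binom{n+2}{3}$ from its off-diagonal terms $|i-j|$ together with $(n+1)$ from the $n+1$ vertical pairs. Collecting the four pieces gives
\[
W(G_n)=4\binom{n+2}{3}+(n+1)=\frac{2n(n+1)(n+2)}{3}+(n+1).
\]

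Finally, since $K\!f(G_n)=\frac{(n+1)(n+2)^2}{6}$ and the $W(G_n)$ above are both cubic in $n$, with leading terms $\frac{n^3}{6}$ and $\frac{2n^3}{3}$ respectively, dividing and letting $n\to\infty$ yields $\frac{1/6}{2/3}=\frac14$, as claimed. The only genuinely delicate point is justifying the distance formula—precisely that the diagonals collapse every distance to the bare column difference and that the vertical pairs are the sole exception; once this is in hand, the Wiener index is a routine double summation and the limit is immediate.
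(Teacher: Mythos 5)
Your proposal is correct and follows essentially the same route as the paper: both derive an exact cubic formula for $W(G_n)$ from the distance structure $d=|i-j|$ (with the vertical pairs at distance $1$) and then compare leading coefficients against $K\!f(G_n)=\frac{(n+1)(n+2)^2}{6}$; you merely organize the double sum by pair types (top--top, bottom--bottom, top--bottom) instead of by summing distance-sums over the two vertex types, and you justify the distance formula explicitly where the paper takes it for granted. Incidentally, your closed form $W(G_n)=\frac{2n^3+6n^2+7n+3}{3}$ is the correct one --- the paper's stated $\frac{2n^3+7n^2+6n+3}{3}$ transposes two coefficients (it is not even an integer at $n=2$, where $W(G_2)=19$) --- but since both expressions have leading term $\frac{2n^3}{3}$, the limit $\frac{1}{4}$ is unaffected.
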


\begin{proof}
	First we calculate $W(G_n)$. We evaluated $d_{ij}$ for all vertices (fixed $i$ and $j$) (there are two types of vertices) and then added all together and finally divided by two. The expressions of each type of vertices are:\\
	
	\begin{table}[!ht] \centering \caption{{Kirchhoff indices of linear crossed polyomino chains from $G_1$ to $G_{50}$.} }
		\begin{tabular}{llllllllll} \hline
			G& $K\!f(G)$& G& $K\!f(G)$& G& $K\!f(G)$& G& $K\!f(G)$& G& $K\!f(G)$\\ \hline
			$G_1$ & 3.00& $G_{11}$& 338.00& $G_{21}$ & 1939.67& $G_{31}$& 5808.00& $G_{41}$& 12943.00\\
			$G_{2}$ & 8.00& $G_{12}$& 424.67& $G_{22}$ & 2208.00& $G_{32}$& 6358.00& $G_{42}$& 13874.67\\
			$G_{3}$ & 16.67& $G_{13}$& 525.00& $G_{23}$ & 2500.00& $G_{33}$& 6941.67& $G_{43}$& 14850.00\\
			$G_{4}$ & 30.00& $G_{14}$& 640.00& $G_{24}$ & 2816.67& $G_{34}$& 7560.00& $G_{44}$& 15870.00\\
			$G_{5}$ & 49.00& $G_{15}$& 770.67& $G_{25}$ & 3159.00& $G_{35}$& 8214.00& $G_{45}$& 16935.67\\
			$G_{6}$ & 74.67& $G_{16}$& 918.00& $G_{26}$ & 3528.00& $G_{36}$& 8904.67& $G_{46}$& 18048.00\\
			$G_{7}$ & 108.00& $G_{17}$& 1083.00& $G_{27}$ & 3924.67& $G_{37}$& 9633.00& $G_{47}$& 19208.00\\
			$G_{8}$ & 150.00& $G_{18}$& 1266.67& $G_{28}$ & 4350.00& $G_{38}$& 10400.00& $G_{48}$& 20461.67\\
			$G_{9}$ & 201.67& $G_{19}$& 1470.00& $G_{29}$ & 4805.00& $G_{39}$& 11206.67& $G_{49}$& 21675.00\\
			$G_{10}$ & 264.00& $G_{20}$& 1694.00& $G_{30}$ & 5290.67& $G_{40}$& 12054.00& $G_{50}$& 22984.00\\
			\hline
		\end{tabular}
		\label{tab2111}
	\end{table}
	\textbf{\Large$\cdot$} Corner vertex of $G_n$:
	\begin{flalign*}
		&\hspace{1cm}f_1(n)=\sum^{n}_{k=1}{k}\cdot2+1=n^2+n+1.&
	\end{flalign*}
	
	\textbf{\Large$\cdot$} Internal $i$-th ($2\leqslant i\leqslant n$) vertex in $G_n$:
	\begin{flalign*}
		&\hspace{1cm}f_2(i,n)=\sum^{i-1}_{k=1}{k}\cdot2+\sum^{n-i+1}_{k=1}{k}\cdot2+1=n^2-2ni+3n+2i^2-4i+1.&
	\end{flalign*}
	Hence,
	\begin{flalign*}
		&\hspace{1cm}W(G_n)=\frac{4f_1(n)+2\sum^{n}_{i=2}f_2(i,n)}{2}=\frac{2n^3+7n^2+6n+3}{3}.&
	\end{flalign*}
	Together with Theorem 3.1, our result follows immediately.
\end{proof}

\begin{kkk}
	Let $G_n$ be a linear crossed polyomino chain with $n$ four-order complete graphs. Then 
	\begin{flalign*}
		&\hspace{1cm}\tau(G)=2^{2n+2}\cdot3^{n-1}.&
	\end{flalign*}	
\end{kkk}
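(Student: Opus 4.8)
The plan is to apply the spanning-tree formula of Theorem \ref{theo22}, namely $\tau(G_n)=\frac{1}{|V_{G_n}|}\prod\mu_i$ taken over the nonzero Laplacian eigenvalues $\mu_i$ of $G_n$, and to feed into it the full Laplacian spectrum that has already been assembled above. By Theorem \ref{lem21} the Laplacian spectrum of $G_n$ is the disjoint union of the spectrum of $L_A$ (the numbers $\alpha_i$) and the spectrum of $L_S$ (the numbers $\beta_j$). Since $L_A=2L(P_{n+1})$ and the path $P_{n+1}$ is connected, $L_A$ contributes exactly one zero eigenvalue $\alpha_1=0$, while every $\beta_j\in\{4,6\}$ is strictly positive; hence the product of the nonzero eigenvalues is precisely $\big(\prod_{i=2}^{n+1}\alpha_i\big)\big(\prod_{j=1}^{n+1}\beta_j\big)$.

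First I would record the two partial products. The $\alpha$-part is already supplied by \eqref{eq3111}, namely $\prod_{i=2}^{n+1}\alpha_i=(n+1)2^n$. For the $\beta$-part, since $L_S$ is diagonal with two diagonal entries equal to $4$ and the remaining $n-1$ entries equal to $6$, we get $\prod_{j=1}^{n+1}\beta_j=4^2\cdot 6^{n-1}=16\cdot 6^{n-1}$.

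Then I would substitute into Theorem \ref{theo22}, using $|V_{G_n}|=2n+2$:
\[
\tau(G_n)=\frac{1}{2n+2}\cdot (n+1)2^n\cdot 16\cdot 6^{n-1}.
\]
The factor $n+1$ cancels against $2n+2=2(n+1)$, and writing $6^{n-1}=2^{n-1}3^{n-1}$ together with $16=2^4$ collapses the powers of two to $2^{n-1}\cdot 2^4\cdot 2^{n-1}=2^{2n+2}$, leaving $\tau(G_n)=2^{2n+2}\cdot 3^{n-1}$, as claimed.

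As for the main obstacle: essentially none of substance remains, because all the eigenvalue data is established before the statement, so the argument is a direct substitution. The only point requiring genuine care is the bookkeeping---confirming that the unique zero eigenvalue sits in $L_A$ (so that no spurious factor of $0$ and no wrong normalization enters) and counting the multiplicities of $4$ and $6$ in $L_S$ correctly as $2$ and $n-1$ respectively. A quick sanity check at $n=1$, where $G_1=K_4$ and $\tau(K_4)=4^{2}=16=2^4\cdot 3^0$, confirms that the constants line up.
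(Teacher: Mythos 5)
Your argument is exactly the paper's proof: apply Theorem \ref{theo22} with the spectrum split by Theorem \ref{lem21}, use $\prod_{i=2}^{n+1}\alpha_i=(n+1)2^n$ from \eqref{eq3111} together with $\prod_{j=1}^{n+1}\beta_j=4^2\cdot 6^{n-1}$, and simplify. The bookkeeping and the final exponent arithmetic are all correct, and the $n=1$ sanity check is a nice extra touch.
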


\begin{proof}
	Since $|V_{G_{n}}|=2(n+1)$, by  Theorem \ref{theo22} and \eqref{eq3111},  we have	
	\begin{flalign*}
		\hspace{1cm}\tau(G_{n})&=\frac{1}{2(n+1)}\bigg(\prod^{n+1}_{i=2}\alpha_i\cdot\prod^{n+1}_{j=1}\beta_j\bigg)\nonumber&\\
		&=\frac{1}{2(n+1)}\bigg[(n+1)2^n\cdot4^{2}\cdot6^{n-1}\bigg]\nonumber&\\
		&=2^{2n+2}\cdot3^{n-1}.&
	\end{flalign*}	
	This completes  the proof.
\end{proof}

\begin{flushleft}
	3.2 Multiplicative degree-Kirchhoff index of  $G_n$
\end{flushleft}
One can easily obtain  that
\begin{flalign*}
	&\hspace{1cm}\mathcal{L}_{{11}}=\left(
	\begin{array}{cccccccccc}
		1& -\frac{1}{\sqrt{15}}& 0 & 0& 0& \dots& 0& 0& 0& 0 \\
		-\frac{1}{\sqrt{15}}& 1& -\frac{1}{5} & 0& 0& \dots& 0& 0& 0& 0\\
		0& -\frac{1}{5} & 1& -\frac{1}{5} &  0& \dots& 0& 0& 0& 0\\
		0& 0& -\frac{1}{5} & 1& -\frac{1}{5} &   \dots& 0& 0& 0& 0\\
		0& 0& 0& -\frac{1}{5} & 1&    \dots& 0& 0& 0& 0\\
		\vdots& \vdots& \vdots& \vdots& \vdots& \ddots& \vdots& \vdots& \vdots& \vdots\\
		0& 0& 0& 0 & 0&    \dots& 1& -\frac{1}{5}& 0& 0\\
		0& 0& 0& 0 & 0&    \dots& -\frac{1}{5}& 1& -\frac{1}{5}& 0\\
		0& 0& 0& 0 & 0&    \dots& 0& -\frac{1}{5}& 1& -\frac{1}{\sqrt{15}}\\
		0& 0& 0& 0 & 0&    \dots& 0& 0& -\frac{1}{\sqrt{15}}& 1
	\end{array}
	\right)_{(n+1)\times(n+1)}&
\end{flalign*}
and
\begin{flalign*}
	&\hspace{1cm}\mathcal{L}_{{12}}=\left(
	\begin{array}{cccccccccc}
		-\frac{1}{3}& -\frac{1}{\sqrt{15}}& 0 & 0& 0& \dots& 0& 0& 0& 0 \\
		-\frac{1}{\sqrt{15}}& -\frac{1}{5}& -\frac{1}{5} & 0& 0& \dots& 0& 0& 0& 0\\
		0& -\frac{1}{5} & -\frac{1}{5}& -\frac{1}{5} &  0& \dots& 0& 0& 0& 0\\
		0& 0& -\frac{1}{5} & -\frac{1}{5}& -\frac{1}{5} &   \dots& 0& 0& 0& 0\\
		0& 0& 0& -\frac{1}{5} & -\frac{1}{5}&    \dots& 0& 0& 0& 0\\
		\vdots& \vdots& \vdots& \vdots& \vdots& \ddots& \vdots& \vdots& \vdots& \vdots\\
		0& 0& 0& 0 & 0&    \dots& -\frac{1}{5}& -\frac{1}{5}& 0& 0\\
		0& 0& 0& 0 & 0&    \dots& -\frac{1}{5}& -\frac{1}{5}& -\frac{1}{5}& 0\\
		0& 0& 0& 0 & 0&    \dots& 0& -\frac{1}{5}& -\frac{1}{5}& -\frac{1}{\sqrt{15}}\\
		0& 0& 0& 0 & 0&    \dots& 0& 0& -\frac{1}{\sqrt{15}}& -\frac{1}{3}
	\end{array}
	\right)_{(n+1)\times(n+1)}.&
\end{flalign*}
Since $\mathcal{L}_A=\mathcal{L}_{{11}}+\mathcal{L}_{{12}}$ and $\mathcal{L}_S=\mathcal{L}_{{11}}-\mathcal{L}_{{12}}$, then we have
\begin{flalign*}
	&\hspace{1cm}\mathcal{L}_{A}=\left(
	\begin{array}{cccccccccc}
		\frac{2}{3}& -\frac{2}{\sqrt{15}}& 0 & 0& 0& \dots& 0& 0& 0& 0 \\
		-\frac{2}{\sqrt{15}}& \frac{4}{5}& -\frac{2}{5} & 0& 0& \dots& 0& 0& 0& 0\\
		0& -\frac{2}{5} & \frac{4}{5}& -\frac{2}{5} &  0& \dots& 0& 0& 0& 0\\
		0& 0& -\frac{2}{5} & \frac{4}{5}& -\frac{2}{5} &   \dots& 0& 0& 0& 0\\
		0& 0& 0& -\frac{2}{5} & \frac{4}{5}&    \dots& 0& 0& 0& 0\\
		\vdots& \vdots& \vdots& \vdots& \vdots& \ddots& \vdots& \vdots& \vdots& \vdots\\
		0& 0& 0& 0 & 0&    \dots& \frac{4}{5}& -\frac{2}{5}& 0& 0\\
		0& 0& 0& 0 & 0&    \dots& -\frac{2}{5}& \frac{4}{5}& -\frac{2}{5}& 0\\
		0& 0& 0& 0 & 0&    \dots& 0& -\frac{2}{5}& \frac{4}{5}& -\frac{2}{\sqrt{15}}\\
		0& 0& 0& 0 & 0&    \dots& 0& 0& -\frac{2}{\sqrt{15}}& \frac{2}{3}
	\end{array}
	\right)_{(n+1)\times(n+1)}&
\end{flalign*}
and 
\begin{flalign*}
	&\hspace{1cm}\mathcal{L}_{S}=\left(
	\begin{array}{cccccccccc}
		\frac{4}{3}& 0& 0 & 0& 0& \dots& 0& 0& 0& 0 \\
		0&  \frac{6}{5} & 0& 0& 0& \dots& 0& 0& 0& 0\\
		0& 0 & \frac{6}{5}& 0 &  0& \dots& 0& 0& 0& 0\\
		0& 0& 0 & \frac{6}{5}& 0 &   \dots& 0& 0& 0& 0\\
		0& 0& 0& 0 & \frac{6}{5}&    \dots& 0& 0& 0& 0\\
		\vdots& \vdots& \vdots& \vdots& \vdots& \ddots& \vdots& \vdots& \vdots& \vdots\\
		0& 0& 0& 0 & 0&    \dots& \frac{6}{5}& 0& 0& 0\\
		0& 0& 0& 0 & 0&    \dots& 0& \frac{6}{5}& 0& 0\\
		0& 0& 0& 0 & 0&    \dots& 0& 0& \frac{6}{5}& 0\\
		0& 0& 0& 0 & 0&    \dots& 0& 0& 0& \frac{4}{3}
	\end{array}
	\right)_{(n+1)\times(n+1)}.&
\end{flalign*}

Suppose that the eigenvalues of $\mathcal{L}_A$ and $\mathcal{L}_S$ are respectively, denoted by $\gamma_i$ ($i=1,2,\dots,n+1$) and $\eta_j$ ($j=1,2,\dots,n+1$) with $\gamma_1\leqslant\gamma_2\leqslant\cdots\leqslant\gamma_{n+1}$ and $\eta_1\leqslant\eta_2\leqslant\cdots\leqslant\eta_{n+1}$.  Then the eigenvalues of $\mathcal{L}{(G_n)}$ is $\{\gamma_1,\gamma_2,\ldots,\gamma_{n+1},\eta_1,\eta_2,\ldots,\eta_{n+1}\}$. Note that $\mathcal{L}_S$ is a diagonal matrix, then  $\eta_1=\eta_2=\cdots=\eta_{n-2}=\eta_{n-1}=\frac{6}{5}$ and $\eta_{n}=\eta_{n+1}=\frac{4}{3}$. For $\xi=(\sqrt3,\sqrt5,\ldots,\sqrt5,\sqrt3)^T$,  $\mathcal{L}_A\xi=0$, then 0 is an eigenvalue of $\mathcal{L}_A$, which implies that $\gamma_1=0$ and $\gamma_2>0$. Note that $|E_{G_n}|=5n+1$, then by \eqref{eq111}, we have
\begin{flalign}\label{eq31}
	&\hspace{1cm}K\!f^{*}(G_n)=2(5n+1)\bigg(\sum^{n+1}_{i=2}\frac{1}{\gamma_i}+\sum^{n+1}_{j=1}\frac{1}{\eta_j}\bigg)=2(5n+1)\bigg(\sum^{n+1}_{i=2}\frac{1}{\gamma_i}+\frac{5n+4}{6}\bigg).&
\end{flalign}

Based on the relationship between the roots and the coefficients of $\Phi(\mathcal{L}_A)$, the formula of $\sum^{n+1}_{i=2}\frac{1}{\gamma_i}$ is derived in the next  theorem.

\begin{kkk}\label{the31}
	Let $\gamma_i$ ($i=1,2,\ldots,n+1$) be defined as above. Then
	\begin{flalign}\label{eq30}
		&\hspace{1cm}\sum^{n+1}_{i=2}\frac{1}{\gamma_i}=\frac{n(25n^2+15n+14)}{12(5n+1)}.&
	\end{flalign}	
\end{kkk}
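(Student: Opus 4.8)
The plan is to use the relationship between the roots and the coefficients of $\Phi(\mathcal{L}_A)$, exactly as the theorem's lead-in suggests. Since $\gamma_1=0$, the constant term of $\Phi(\mathcal{L}_A)=\det(xI-\mathcal{L}_A)$ vanishes. Expanding $\Phi(\mathcal{L}_A)=\sum_{k=0}^{n+1}(-1)^kE_kx^{\,n+1-k}$, where $E_k$ is the sum of the $k\times k$ principal minors of $\mathcal{L}_A$ (so $E_0=1$ and $E_{n+1}=\det\mathcal{L}_A=0$), the two lowest non-vanishing coefficients are those of $x^2$ and $x^1$. Because $\gamma_2,\dots,\gamma_{n+1}$ are precisely the roots of $\Phi(\mathcal{L}_A)/x$, the elementary-symmetric-function formulas give
\[
\sum_{i=2}^{n+1}\frac{1}{\gamma_i}=\frac{E_{n-1}}{E_n},
\]
so the problem reduces to evaluating $E_n=\prod_{i=2}^{n+1}\gamma_i$ (the sum of $n\times n$ minors, i.e.\ single deletions) and $E_{n-1}$ (the sum of $(n-1)\times(n-1)$ minors, i.e.\ pairs of deletions).

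The key observation that makes these sums tractable is that $\mathcal{L}_A=D^{-1/2}L_AD^{-1/2}$, where $D=\mathrm{diag}(3,5,\dots,5,3)$ is the degree matrix on the vertices $1,\dots,n+1$ and $L_A=2L(P_{n+1})$. As $D$ is diagonal, for any deletion set $S$ the principal submatrix on the complementary indices $\overline S$ satisfies $\det\big((\mathcal{L}_A)[\overline S]\big)=\det\big((L_A)[\overline S]\big)\big/\prod_{i\in\overline S}d_i$, so every minor of $\mathcal{L}_A$ is a degree-weighted minor of the far simpler $L_A$. I would evaluate the $L_A$-minors by the all-minors matrix-tree theorem on the path $P_{n+1}$: deleting one vertex leaves the cofactor $\tau(P_{n+1})=1$, so $\det\big((L_A)[\text{del }k]\big)=2^{n}$; deleting two vertices $k<l$ counts the spanning $2$-forests of $P_{n+1}$ separating $k$ from $l$, of which there are exactly $l-k$, so $\det\big((L_A)[\text{del }\{k,l\}]\big)=2^{\,n-1}(l-k)$.

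Assembling these with $\prod_{i=1}^{n+1}d_i=9\cdot5^{\,n-1}$ and $\sum_k d_k=5n+1$ yields $E_n=\dfrac{2^{n}(5n+1)}{9\cdot 5^{\,n-1}}$ and $E_{n-1}=\dfrac{2^{\,n-1}}{9\cdot 5^{\,n-1}}\sum_{k<l}(l-k)\,d_kd_l$, whence
\[
\sum_{i=2}^{n+1}\frac{1}{\gamma_i}=\frac{1}{2(5n+1)}\sum_{1\le k<l\le n+1}(l-k)\,d_kd_l .
\]
It then only remains to compute the double sum: writing $d_k=5-2\delta_k$ with $\delta_k=1$ at the two corners $k\in\{1,n+1\}$ and $0$ otherwise splits it into $25\sum_{k<l}(l-k)$ plus two corner corrections, where the leading term uses the elementary identity $\sum_{d=1}^{n}d(n+1-d)=\tfrac{n(n+1)(n+2)}{6}$. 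Collecting terms gives $\sum_{k<l}(l-k)\,d_kd_l=\tfrac{n(25n^2+15n+14)}{6}$, which substituted above produces the claimed formula. The main obstacle is conceptual rather than arithmetic: one must correctly identify the double-deletion minor of the path Laplacian as the separation count $l-k$ (the single-deletion case being the familiar $\tau(P_{n+1})=1$), after which only the routine bookkeeping of the two corner degrees remains. A more computational alternative, nearer to a literal reading of ``roots and coefficients,'' is to obtain $\Phi(\mathcal{L}_A)$ directly from the tridiagonal determinant recurrence and extract the coefficients of $x$ and $x^2$; there the obstacle shifts to handling the two boundary rows, whose entries $\tfrac23$ and $-\tfrac{2}{\sqrt{15}}$ differ from the interior values $\tfrac45$ and $-\tfrac25$, so the Chebyshev-type recurrence carries non-uniform end conditions.
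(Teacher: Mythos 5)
Your proposal is correct, and while it shares the paper's overall skeleton (Vieta's formulas reduce the sum to the ratio of the sums of $(n-1)\times(n-1)$ and $n\times n$ principal minors of $\mathcal{L}_A$, i.e.\ your $E_{n-1}/E_n$, which is exactly the paper's $(-1)^{n-1}a_{n-1}/(-1)^na_n$), the way you evaluate those minors is genuinely different. The paper works directly on the tridiagonal matrix $\mathcal{L}_A$: it sets up the recurrence $c_i=\frac45c_{i-1}-\frac{4}{25}c_{i-2}$ for the leading principal minors, solves it to get $c_i=\frac53\left(\frac25\right)^i$, factors each principal minor into a product of a leading block, an interior block $M_{ij}$ with $\det M_{ij}=\left(\frac25\right)^{j-i-1}(j-i)$, and a trailing block, and then sums explicit geometric-type series. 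You instead observe that $\mathcal{L}_A=D^{-1/2}L_AD^{-1/2}$ with $D=\mathrm{diag}(3,5,\dots,5,3)$ and $L_A=2L(P_{n+1})$, so every principal minor of $\mathcal{L}_A$ is a degree-weighted minor of the path Laplacian, and you evaluate those by the all-minors matrix-tree theorem: the single-deletion minors are $2^n\tau(P_{n+1})=2^n$ and the double-deletion minors count the $l-k$ separating $2$-forests, giving $2^{n-1}(l-k)$. All your intermediate quantities check out ($E_n=\frac{2^n(5n+1)}{9\cdot5^{n-1}}$ agrees with the paper's $(-1)^na_n=\frac{25n+5}{9}\left(\frac25\right)^n$, and $\sum_{k<l}(l-k)d_kd_l=\frac{n(25n^2+15n+14)}{6}$ yields the stated formula). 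What your route buys is conceptual transparency and a clean intermediate identity $\sum_{i=2}^{n+1}\frac{1}{\gamma_i}=\frac{1}{2(5n+1)}\sum_{k<l}(l-k)d_kd_l$ that isolates all the combinatorics in one weighted sum over the path, avoiding the non-uniform boundary entries $\frac23$ and $-\frac{2}{\sqrt{15}}$ entirely; the paper's recurrence approach is more self-contained (it needs no matrix-tree machinery) and generalizes mechanically to chains where the reduced matrix is tridiagonal but not a rescaled path Laplacian. Note that the paper's $\det M_{ij}=\left(\frac25\right)^{j-i-1}(j-i)$ is precisely the normalized avatar of your forest count $l-k$, so the two computations are secretly the same identity seen from opposite ends.
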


\begin{proof}
	Suppose that $\Phi(\mathcal{L}_A)=x^{n+1}+a_1x^n+\cdots+a_{n-1}x^2+a_nx=x(x^n+a_1x^{n-1}+\cdots+a_{n-1}x+a_n)$. Then $\gamma_i$ ($i=2,3,\ldots,n+1$) satisfies the following equation
	\begin{flalign*}
		&\hspace{1cm}x^n+a_1x^{n-1}+\cdots+a_{n-1}x+a_n=0.&
	\end{flalign*}	
	So $\frac{1}{\gamma_i}$ ($i=2,3\ldots,n+1$) satisfies the following equation
	\begin{flalign*}
		&\hspace{1cm}a_nx^n+a_{n-1}x^{n-1}+\cdots+a_{1}x+1=0.&
	\end{flalign*}
	By Vieta's Theorem, we have
	\begin{flalign}\label{eq32}
		&\hspace{1cm}\sum^{n+1}_{i=2}\frac{1}{\gamma_i}=\frac{(-1)^{n-1}a_{n-1}}{(-1)^na_n}.&
	\end{flalign}

	For $1\leqslant i\leqslant n$, let $C_i$ be the $i$-th order principal submatrix formed by the first $i$ rows and columns of $\mathcal{L}_A$ and $c_i=\det{C_i}$. Obviously, $c_1=\frac{2}{3}$ and $c_2=\frac{4}{15}$. For $3\leqslant i\leqslant n$, expanding $\det{C_i}$ with regard to its last row yields that $c_i=\frac{4}{5}c_{i-1}-\frac{4}{25}c_{i-2}$.
	Then, we have 
	\begin{flalign}\label{eq33}
		&\hspace{1cm}c_i=\frac{5}{3}\cdot\bigg({\frac{2}{5}}\bigg)^i.&
	\end{flalign}	
	
	Next, we will determine the expressions of $(-1)^na_n$ and $(-1)^{n-1}a_{n-1}$, respectively. For convenience, let the diagonal entries of $\mathcal{L}_A$ be $k_{ii}$ and $c_0$ be $1$.\\
	
	\textbf{Claim 1.}  $(-1)^na_n=\frac{25n+5}{9}\cdot\big(\frac{2}{5}\big)^n$.\\
	
	Since the number $(-1)^na_n$ is the sum of those principal minors of $\mathcal{L}_A$ which have $n$ rows and columns, we have
	\begin{flalign}\label{eq34}
		&\hspace{1cm}(-1)^na_n=\sum^{n+1}_{i=1}\left|
		\begin{array}{cccccccc}
			k_{11}& -\frac{2}{\sqrt{15}}&  \dots& 0& 0& 0& 0& 0 \\
			-\frac{2}{\sqrt{15}}& k_{22}&  \dots& 0& 0& 0& 0& 0\\
			\vdots& \vdots& \ddots& \vdots& \vdots& \vdots& \vdots& \vdots\\
			0& 0& \dots& k_{i-1,i-1} & 0& 0 &   \dots& 0\\
			0& 0& \dots& 0 & k_{i+1,i+1}&  -\frac{2}{5} &   \dots& 0\\
			\vdots& \vdots& \vdots& \vdots& \vdots& \ddots& \vdots& \vdots\\
			0& 0& \dots& 0 & 0&    \dots& k_{n,n}&  -\frac{2}{\sqrt{15}}\\
			0& 0& \dots& 0 & 0&   \dots& -\frac{2}{\sqrt{15}}&  k_{n+1,n+1}
		\end{array}
		\right|
		\nonumber&\\
		&\hspace{2.1cm}=\sum^{n+1}_{i=1}\left({\left|
			\begin{array}{cccc}
				k_{11}& -\frac{2}{\sqrt{15}}&  \dots& 0 \\
				-\frac{2}{\sqrt{15}}& k_{22}&  \dots& 0\\
				\vdots& \vdots& \ddots& \vdots\\
				0& 0& \cdots&  k_{i-1,i-1}
			\end{array}
			\right|}{\left|
			\begin{array}{cccc}
				k_{i+1,i+1}& -\frac{2}{5}&  \dots& 0 \\
				\vdots& \ddots& \vdots& \vdots\\
				0& \dots&  k_{n,n}&  -\frac{2}{\sqrt{15}}\\
				0&  \dots& -\frac{2}{\sqrt{15}}&  k_{n+1,n+1}
			\end{array}
			\right|}
		\right).&
	\end{flalign}
	Note that a permutation similarity transformation of a square matrix preserves its determinant. Together with the property of $\mathcal{L}_A$, the right hand side of \eqref{eq34} is equal to $\det{C_{n+1-i}}$. By \eqref{eq33}, we have
	\begin{flalign*}
		\hspace{1cm}(-1)^na_n&=\sum^{n+1}_{i=1}c_{i-1}c_{n+1-i}\nonumber&\\
		&=2c_n+\sum^{n}_{i=2}c_{i-1}c_{n+1-i}\nonumber&\\
		&=2\cdot\frac{5}{3}\bigg(\frac{2}{5}\bigg)^n+\sum^{n}_{i=2}\frac{5}{3}\bigg(\frac{2}{5}\bigg)^{i-1}\cdot\frac{5}{3}\bigg(\frac{2}{5}\bigg)^{n+1-i}\nonumber&\\
		&=\frac{25n+5}{9}\cdot\bigg(\frac{2}{5}\bigg)^n.&
	\end{flalign*}

	\textbf{Claim 2.}  $(-1)^{n-1}a_{n-1}=\frac{n(25n^2+15n+14)}{54}\cdot\big(\frac{2}{5}\big)^{n-1}$.\\
	
	Note that the number $(-1)^{n-1}a_{n-1}$ is the sum of those principal minors of $\mathcal{L}_A$ which have $n-1$ rows and columns, hence $(-1)^{n-1}a_{n-1}$ equals 
	
	\begin{flalign*}
		&\sum_{1\leqslant i<j\leqslant n+1}\left|
		\begin{array}{cccccccccccc}
			k_{11}& -\frac{2}{\sqrt{15}}&  \dots& 0& 0& 0& \dots& 0& 0& \dots& 0& 0 \\
			-\frac{2}{\sqrt{15}}& k_{22}&  \dots& 0& 0& 0& \dots& 0& 0& \dots& 0& 0 \\
			\vdots& \vdots& \ddots& \vdots& \vdots& \vdots& \dots& \vdots& \vdots& \dots& \vdots& \vdots\\
			0& 0&  \dots& k_{i-1,i-1}& 0& 0& \dots& 0& 0& \dots& 0& 0 \\
			0& 0&  \dots& 0& k_{i+1,i+1}& -\frac{2}{5}& \dots& 0& 0& \dots& 0& 0 \\		
			0& 0&  \dots& 0& -\frac{2}{5}& k_{i+2,i+2}& \dots& 0& 0& \dots& 0& 0 \\			
			\vdots& \vdots& \dots& \vdots& \vdots& \vdots& \ddots& \vdots& \vdots& \dots& \vdots& \vdots\\	
			0& 0&  \dots& 0& 0& 0& \dots& k_{j-1,j-1}& 0& \dots& 0& 0 \\		
			0& 0&  \dots& 0& 0& 0& \dots& 0& k_{j+1,j+1}&  \dots& 0& 0 \\	
			\vdots& \vdots& \dots& \vdots& \vdots& \vdots& \dots& \vdots& \vdots& \ddots& \vdots& \vdots\\		
			0& 0&  \dots& 0& 0& 0& \dots& 0& 0&  \dots& k_{n,n}& -\frac{2}{\sqrt{15}} \\	
			0& 0&  \dots& 0& 0& 0& \dots& 0& 0&  \dots& -\frac{2}{\sqrt{15}}& k_{n+1,n+1} 
		\end{array}
		\right|
		\nonumber&\\
		&=\sum_{1\leqslant i<j\leqslant n+1}{\left|
			\begin{array}{cccc}
				k_{11}& -\frac{2}{\sqrt{15}}&  \dots& 0 \\
				-\frac{2}{\sqrt{15}}& k_{22}&  \dots& 0\\
				\vdots& \vdots& \ddots& \vdots\\
				0& 0& \dots&  k_{i-1,i-1}
			\end{array}
			\right|}\cdot{\left|
			\begin{array}{cccc}
				k_{i+1,i+1}& -\frac{2}{5}&  \dots& 0 \\
				-\frac{2}{5}& k_{i+2,i+2}&  \dots&  0\\
				\vdots& \vdots& \ddots& \vdots\\
				0&  0& \dots& k_{j-1,j-1}
			\end{array}
			\right|}\nonumber&\\
		&~~~~~\cdot{\left|
			\begin{array}{cccc}
				k_{j+1,j+1}&   \dots& 0& 0 \\
				\vdots& \ddots& \vdots& \vdots\\
				0&   \dots& k_{n,n}& -\frac{2}{\sqrt{15}}\\
				0&  \dots& -\frac{2}{\sqrt{15}}&  k_{n+1,n+1}
			\end{array}
			\right|}&
	\end{flalign*}
	Similarly, the right hand side of the above equation is equal to $\det{C_{n+1-j}}$. Then, we have
	\begin{flalign}\label{eq35}
		&\hspace{1cm}(-1)^{n-1}a_{n-1}=\sum_{1\leqslant i<j\leqslant n+1}c_{i-1}c_{n+1-j}\cdot\det{M}_{ij},&
	\end{flalign}	
	where
	\begin{flalign*}
		&\hspace{1cm} {M}_{ij}=\left(
		\begin{array}{rrrrrr}
			\frac{4}{5}& -\frac{2}{5}& 0 & \dots& 0& 0 \\
			-\frac{2}{5}& \frac{4}{5}& -\frac{2}{5}&  \dots& 0& 0 \\
			0 & -\frac{2}{5}& \frac{4}{5}&  \dots& 0& 0 \\
			\vdots& \vdots& \vdots& \ddots& \vdots& \vdots\\
			0& 0& 0&  \dots&  \frac{4}{5}&  -\frac{2}{5}\\
			0& 0& 0&  \dots&   -\frac{2}{5}& \frac{4}{5}
		\end{array}
		\right)_{(j-i-1)\times(j-i-1)}&
	\end{flalign*}
	It is easy to check that $\det{M}_{ij}=\big(\frac{2}{5}\big)^{j-i-1}(j-i)$. Thus in view of \eqref{eq35}, we have
	\begin{flalign*}
	(-1)^{n-1}a_{n-1}&=\sum_{1\leqslant i<j\leqslant n+1}\bigg(\frac{2}{5}\bigg)^{j-i-1}(j-i)\cdot{c_{i-1}}{c_{n+1-j}}
		\nonumber& \\
		&=\sum^{n}_{i=2}\bigg(\frac{2}{5}\bigg)^{n-i}(n+1-i)\cdot{c_{i-1}}+\sum^{n}_{j=2}\bigg(\frac{2}{5}\bigg)^{j-2}(j-1)\cdot{c_{n+1-j}}+\sum_{2\leqslant i<j\leqslant n}\bigg(\frac{2}{5}\bigg)^{j-i-1}(j-i)\cdot{c_{i-1}}{c_{n+1-j}}+n\bigg(\frac{2}{5}\bigg)^{n-1}\nonumber&\\
		&= 2\sum^{n}_{i=2}\bigg(\frac{2}{5}\bigg)^{n-i}(n+1-i)\cdot{c_{i-1}}+\sum_{2\leqslant i<j\leqslant n}\bigg(\frac{2}{5}\bigg)^{j-i-1}(j-i)\cdot{c_{i-1}}{c_{n+1-j}}+n\bigg(\frac{2}{5}\bigg)^{n-1}&\\
	\end{flalign*}
	
	By \eqref{eq33}, we have
	\begin{flalign*}
		&\hspace{1cm}\sum^n_{i=2}\bigg(\frac{2}{5}\bigg)^{n-i}(n+1-i)\cdot{c_{i-1}}=\sum^{n}_{i=2}\bigg(\frac{2}{5}\bigg)^{n-i}(n+1-i)\cdot\frac{5}{3}\bigg(\frac{2}{5}\bigg)^{i-1}=\frac{5n(n-1)}{6}\bigg(\frac{2}{5}\bigg)^{n-1}&
	\end{flalign*}	
	and
	\begin{flalign*}
		\hspace{1cm}\sum_{2\leqslant i<j\leqslant n}\bigg(\frac{2}{5}\bigg)^{j-i-1}(j-i)\cdot{c_{i-1}}{c_{n+1-j}}&=\sum_{2\leqslant i<j\leqslant n}\bigg(\frac{2}{5}\bigg)^{j-i-1}\cdot\bigg(\frac{2}{5}\bigg)^{i-1}\cdot\bigg(\frac{2}{5}\bigg)^{n+1-j}\frac{25(j-i)}{9}\nonumber&\\
		&=\frac{25}{9}\bigg(\frac{2}{5}\bigg)^{n-1}\sum_{2\leqslant i<j\leqslant n}(j-i) \nonumber&\\
		&=\frac{25n(n-1)(n-2)}{54}\bigg(\frac{2}{5}\bigg)^{n-1}.&
	\end{flalign*}

	Hence,
	\begin{flalign*}
		&(-1)^{n-1}a_{n-1}=\frac{5n(n-1)}{3}\bigg(\frac{2}{5}\bigg)^{n-1}+\frac{25n(n-1)(n-2)}{54}\bigg(\frac{2}{5}\bigg)^{n-1}+n\bigg(\frac{2}{5}\bigg)^{n-1}=\frac{n(25n^2+15n+14)}{54}\bigg(\frac{2}{5}\bigg)^{n-1}.&
	\end{flalign*}	
	
	Substituting Claims 1-2 into \eqref{eq32} yields that $\sum^{n+1}_{i=2}\frac{1}{\gamma_i}=\frac{n(25n^2+15n+14)}{12(5n+1)}$.	
\end{proof}

Together with \eqref{eq31} and \eqref{eq30}, we can get the following theorem immediately.

\begin{kkk}
	Let $G_n$ be a linear crossed polyomino chain with $n$ four-order complete graphs. Then 
	\begin{flalign}\label{eq388}
		&\hspace{1cm}K\!f^{*}(G_n)=\frac{25n^3+65n^2+64n+8}{6}.&
	\end{flalign}	
\end{kkk}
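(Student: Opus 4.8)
The plan is to obtain \eqref{eq388} by directly combining the two ingredients already assembled: the general expression \eqref{eq31} for $K\!f^{*}(G_n)$, which packages together the prefactor $2|E_{G_n}|=2(5n+1)$, the reciprocal eigenvalue sum of the block $\mathcal{L}_A$, and the contribution of the diagonal block $\mathcal{L}_S$; together with the closed form \eqref{eq30} for $\sum_{i=2}^{n+1}\frac{1}{\gamma_i}$ established in Theorem \ref{the31}. Since the eigenvalues of $\mathcal{L}_S$ consist of $n-1$ copies of $\frac{6}{5}$ and two copies of $\frac{4}{3}$, one has $\sum_{j=1}^{n+1}\frac{1}{\eta_j}=(n-1)\frac{5}{6}+2\cdot\frac{3}{4}=\frac{5n+4}{6}$, so \eqref{eq31} already reads $K\!f^{*}(G_n)=2(5n+1)\big(\sum_{i=2}^{n+1}\frac{1}{\gamma_i}+\frac{5n+4}{6}\big)$ and the whole theorem reduces to a single substitution.

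Concretely, I would substitute \eqref{eq30} into \eqref{eq31}. The crucial cancellation is that the prefactor $2(5n+1)$ annihilates the denominator $12(5n+1)$ of the reciprocal sum, turning the first summand into $\frac{n(25n^2+15n+14)}{6}$, while the second summand becomes $2(5n+1)\cdot\frac{5n+4}{6}=\frac{(5n+1)(5n+4)}{3}$. I would then place both terms over the common denominator $6$, using $2(5n+1)(5n+4)=50n^2+50n+8$ and $n(25n^2+15n+14)=25n^3+15n^2+14n$, and add the numerators to obtain $25n^3+65n^2+64n+8$, which is exactly \eqref{eq388}.

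There is essentially no obstacle in this concluding step; the genuine difficulty lies entirely in Theorem \ref{the31}, whose derivation of $\sum_{i=2}^{n+1}\frac{1}{\gamma_i}$ rests on the Vieta relation \eqref{eq32}, the recursion $c_i=\frac{4}{5}c_{i-1}-\frac{4}{25}c_{i-2}$ for the leading principal minors of $\mathcal{L}_A$, and the evaluation of the tridiagonal determinants $\det M_{ij}$. Once those are in hand the present theorem is immediate. The only point I would check carefully is the $\mathcal{L}_S$ contribution $\frac{5n+4}{6}$, i.e. confirming the multiplicities $\eta_1=\cdots=\eta_{n-1}=\frac{6}{5}$ and $\eta_n=\eta_{n+1}=\frac{4}{3}$ read directly off the diagonal of $\mathcal{L}_S$, since this constant is carried unchanged through the final algebra and determines the lower-order terms of the result.
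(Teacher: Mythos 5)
Your proposal is correct and follows exactly the paper's route: the paper likewise obtains the result ``immediately'' by substituting \eqref{eq30} into \eqref{eq31}, and your algebra (the cancellation of $2(5n+1)$ against $12(5n+1)$ and the verification that $\sum_{j=1}^{n+1}\frac{1}{\eta_j}=\frac{5n+4}{6}$) checks out, yielding $\frac{25n^3+15n^2+14n}{6}+\frac{50n^2+50n+8}{6}=\frac{25n^3+65n^2+64n+8}{6}$.
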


Multiplicative degree-Kirchhoff indices of linear crossed polyomino chains from $G_1$ to $G_{50}$ are listed in Table 2. 

\begin{table}[!ht] \centering \caption{{Multiplicative  degree-Kirchhoff indices of linear crossed polyomino chains from $G_1$ to $G_{50}$.} }
	\begin{tabular}{llllllllll} \hline
		G& $K\!f^{*}(G)$& G& $K\!f^{*}(G)$& G& $K\!f^{*}(G)$& G& $K\!f^{*}(G)$& G& $K\!f^{*}(G)$\\ \hline
		$G_1$ & 27.00& $G_{11}$& 6975.33& $G_{21}$ & 43590.33& $G_{31}$& 134872.00& $G_{41}$& 305820.33\\
		$G_{2}$ & 99.33& $G_{12}$& 8889.33& $G_{22}$ & 49846.00& $G_{32}$& 147969.33& $G_{42}$& 328259.33\\
		$G_{3}$ & 243.33& $G_{13}$& 11125.00& $G_{23}$ & 56673.33& $G_{33}$& 161888.33& $G_{43}$& 351770.00\\
		$G_{4}$ & 484.00& $G_{14}$& 13707.33& $G_{24}$ & 64097.33& $G_{34}$& 176654.00& $G_{44}$& 376377.33\\
		$G_{5}$ & 846.33& $G_{15}$& 16661.33& $G_{25}$ & 72143.00& $G_{35}$& 192291.33& $G_{45}$& 402106.33\\
		$G_{6}$ & 1355.33& $G_{16}$& 20012.00& $G_{26}$ & 80835.33& $G_{36}$& 208825.33& $G_{46}$& 428982.00\\
		$G_{7}$ & 2036.00& $G_{17}$& 23784.33& $G_{27}$ & 90199.33& $G_{37}$& 226281.00& $G_{47}$& 457029.33\\
		$G_{8}$ & 2913.33& $G_{18}$& 28003.33& $G_{28}$ & 100260.00& $G_{38}$& 244683.33& $G_{48}$& 486273.33\\
		$G_{9}$ & 4012.33& $G_{19}$& 32694.00& $G_{29}$ & 111042.33& $G_{39}$& 264057.33& $G_{49}$& 516739.00\\
		$G_{10}$ & 5358.00& $G_{20}$& 37881.33& $G_{30}$ & 122571.33& $G_{40}$& 184428.00& $G_{50}$& 548451.33\\
		\hline
	\end{tabular}
	\label{tab21}
\end{table}

Finally, we show that the multiplicative degree-Kirchhoff index of $G_n$ is approximately one quarter of its Gutman index.

\begin{kkk}
	Let $G_n$ be a linear crossed polyomino chain with $n$ four-order complete graphs. Then 
	\begin{flalign*}
		&\hspace{1cm}\lim_{n\to\infty}\frac{K\!f^{*}(G_n)}{\textrm{Gut}(G_n)}=\frac{1}{4}.&
	\end{flalign*}	
\end{kkk}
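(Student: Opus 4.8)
The plan is to prove the limit by computing the Gutman index $\textrm{Gut}(G_n)$ explicitly as a cubic polynomial in $n$ and then comparing its leading coefficient with that of $K\!f^{*}(G_n)$ from \eqref{eq388}. Since $K\!f^{*}(G_n)=\frac{25n^3+65n^2+64n+8}{6}$ has leading term $\frac{25}{6}n^3$, the whole statement reduces to showing that $\textrm{Gut}(G_n)$ is a cubic polynomial with leading term $\frac{25}{6}\cdot 4\, n^3=\frac{50}{3}n^3$; the lower-order terms are then irrelevant to the limit. So the substantive work is entirely in evaluating $\textrm{Gut}(G_n)=\sum_{i<j}d_id_jd_{ij}$.

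First I would record the degree sequence of $G_n$. The four corner vertices (the endpoints of the first and last vertical edges) have degree $3$, and every other vertex has degree $5$; this is consistent with $\sum_i d_i=2|E_{G_n}|=2(5n+1)=10n+2$, since $4\cdot 3+(2n-2)\cdot 5=10n+2$. Next I would reuse the distance bookkeeping already set up in the proof of Theorem 3.2: the quantities $f_1(n)$ and $f_2(i,n)$ give $\sum_j d_{ij}$ for a corner vertex and for an internal vertex in a column indexed by $i$, respectively. For the Gutman index I need the degree-weighted analogue $\sum_j d_i d_j d_{ij}$, so I would introduce, for each fixed vertex, the weighted distance sum $g(\,\cdot\,)=\sum_j d_j d_{ij}$, splitting the target vertices $j$ into the $4$ corner vertices (weight $3$) and the remaining $2n-2$ internal vertices (weight $5$). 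Because the chain is symmetric and the two vertices in a given column are interchangeable, these weighted sums depend only on the column index, exactly as $f_1,f_2$ did.

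Concretely, I would write $\textrm{Gut}(G_n)=\frac{1}{2}\sum_i d_i\, g(i)$, where $g(i)=\sum_j d_j d_{ij}$, and evaluate $g$ for the two vertex types by the same "distance to column $k$ is realized by $2$ vertices, plus the within-column partner at distance $1$" structure used for $f_1$ and $f_2$. Summing $d_i\,g(i)$ over the four corner vertices ($d_i=3$) and the $2n-2$ internal vertices ($d_i=5$), then halving, yields $\textrm{Gut}(G_n)$ as an explicit cubic in $n$. I expect the outcome to be of the form $\textrm{Gut}(G_n)=\frac{50}{3}n^3+(\text{lower order})$, after which dividing by $K\!f^{*}(G_n)$ and letting $n\to\infty$ gives $\frac{25/6}{50/3}=\frac14$, as claimed.

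The main obstacle is purely the careful accounting of the degree weights in the weighted distance sums: unlike in the Wiener computation, where every target contributes equally, here the corner targets (degree $3$) and interior targets (degree $5$) must be tracked separately in each distance shell, and one must be sure not to double-count the within-column partner or the self term. Once the weighted sums $g(i)$ are assembled correctly, the remaining summation over $i$ is a routine evaluation of $\sum i$, $\sum i^2$ and $\sum i^3$, and only the leading coefficient $\frac{50}{3}n^3$ actually matters for the limit, so I would not belabor the lower-order terms beyond confirming the cubic leading behavior.
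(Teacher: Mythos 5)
Your proposal is correct and follows essentially the same route as the paper: the paper likewise computes $\textrm{Gut}(G_n)$ by evaluating the degree-weighted distance sums separately for the corner vertices (degree $3$) and the internal vertices (degree $5$), obtaining $\textrm{Gut}(G_n)=\frac{50n^3+30n^2+103n-21}{3}$, and then compares with $K\!f^{*}(G_n)=\frac{25n^3+65n^2+64n+8}{6}$ to get the limit $\frac{1}{4}$. Your observation that only the leading coefficient $\frac{50}{3}n^3$ matters is a harmless simplification of the same computation.
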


\begin{proof}
	First we calculate $\textrm{Gut}(G_n)$. We evaluated $d_id_jd_{ij}$ for all vertices (fixed $i$ and $j$) (there are two types of vertices) and then added all together and finally divided by two. The expressions of each type of vertices are:\\
	
	\textbf{\Large$\cdot$} Corner vertex of $G_n$:
	\begin{flalign*}
		&\hspace{1cm}g_1(n)=\sum^{n-1}_{k=1}3\cdot5\cdot{k}\cdot2+3\cdot3\cdot(1+n+n)=15n^2+3n+9.&
	\end{flalign*}
	
	\textbf{\Large$\cdot$} Internal $i$-th ($2\leqslant i\leqslant n$) vertex in $G_n$:
	\begin{flalign*}
		&\hspace{1cm}g_2(i,n)=\sum^{i-2}_{k=1}5\cdot5\cdot{k}\cdot2+\sum^{n-i}_{k=1}5\cdot5\cdot{k}\cdot2+5\cdot5\cdot1+5\cdot3\cdot(i-1+n+1-i)\cdot2\nonumber&\\
		&\hspace{2cm}=25n^2-50ni+55n+50i^2-100i+75.&
	\end{flalign*}
	Hence,
	\begin{flalign*}
		&\hspace{1cm}\textrm{Gut}(G_n)=\frac{4g_1(n)+2\sum^{n}_{i=2}g_2(i,n)}{2}=\frac{50n^3+30n^2+103n-21}{3}.&
	\end{flalign*}
	Together with Theorem \ref{the31}, our result follows immediately.
\end{proof}

\section{Kirchhoff index and number of spanning trees of $G^r_n$}
In this section, for any graph $G^r_n\in \mathcal{G}^r_{n}$, we will determine its Kirchhoff index and number of spanning trees. Furthermore, we will show that the Kirchhoff index of $G^r_n$ is approximately one quarter of its Wiener index. 

Similar to the Laplacian polynomial decomposition theorem of $G_n$, we can obtain that the Laplacian spectrum of $G^r_n$ consists of the eigenvalues of both $L_A(G^r_n)$ and $L_S(G^r_n)$. It is routine to check that
\begin{flalign*}
	&\hspace{1cm}{L}_{{11}}(G^r_n)=\left(
	\begin{array}{ccccccc}
		l_{11}& -1& 0 & \dots&  0& 0& 0 \\
		-1& l_{22}& -1 &  \dots&  0& 0& 0\\
		0& -1 & l_{33}& \dots&  0& 0& 0\\
		\vdots& \vdots& \vdots&  \ddots& \vdots&  \vdots& \vdots\\
		0& 0& 0&     \dots& l_{n-1,n-1}& -1& 0\\
		0& 0& 0&     \dots&  -1& l_{n,n}& -1\\
		0& 0& 0&     \dots& 0& -1& l_{n+1,n+1}
	\end{array}
	\right)_{(n+1)\times(n+1)}&
\end{flalign*}
and
\begin{flalign*}
	&\hspace{1cm}{L}_{{12}}(G^r_n)=\left(
	\begin{array}{ccccccc}
		t_{11}& -1& 0 &  \dots& 0& 0& 0 \\
		-1& t_{22}& -1 &  \dots&  0& 0& 0\\
		0& -1& t_{33}&  \dots&  0& 0& 0\\
		\vdots& \vdots& \vdots&  \ddots&  \vdots& \vdots& \vdots\\
		0& 0& 0&     \dots&  t_{n-1,n-1}& -1& 0\\
		0& 0& 0&    \dots&  -1& t_{n,n}& -1\\
		0& 0& 0&     \dots& 0& -1& t_{n+1,n+1}
	\end{array}
	\right)_{(n+1)\times(n+1)},&
\end{flalign*}
where $l_{ii}=d_i$, $t_{ii}=0$ if $d_{i}\in\{2,4\}$ and $t_{ii}=-1$ if $d_{i}\in\{3,5\}$.

Note that $l_{11}+t_{11}=l_{n+1,n+1}+t_{n+1,n+1}=2$ and $l_{ii}+t_{ii}=4$ for $2\leqslant i\leqslant n$. Then we have
\begin{flalign*}
	&\hspace{1cm}{L}_{A}(G^r_n)=\left(
	\begin{array}{ccccccc}
		2& -2& 0 &  \dots&  0& 0& 0 \\
		-2& 4& -2 &  \dots&  0& 0& 0\\
		0& -2 & 4&  \dots&  0& 0& 0\\
		\vdots& \vdots&  \vdots& \ddots&  \vdots& \vdots& \vdots\\
		0& 0& 0&    \dots&  4& -2& 0\\
		0& 0& 0&     \dots&  -2& 4& -2\\
		0& 0& 0&     \dots&  0& -2& 2
	\end{array}
	\right)_{(n+1)\times(n+1)}.&
\end{flalign*}
In addition, since $l_{jj}-t_{jj}=2\big\lfloor\frac{d_{j}+1}{2}\big\rfloor$ $(1\leqslant j\leqslant n+1)$, then
\begin{flalign*}
	&\hspace{1cm}{L}_{S}(G^r_n)=\left(
	\begin{array}{ccccccc}
		s_{11}& 0& 0 &  \dots&  0& 0& 0 \\
		0&  s_{22} & 0&  \dots&  0& 0& 0\\
		0& 0 & s_{33}&  \dots&  0& 0& 0\\
		\vdots& \vdots& \vdots&  \ddots& \vdots&  \vdots& \vdots\\
		0& 0& 0&     \dots&  s_{n-1,n-1}& 0& 0\\
		0& 0& 0&    \dots&  0& s_{n,n}& 0\\
		0& 0& 0&   \dots&  0& 0& s_{n+1,n+1}
	\end{array}
	\right)_{(n+1)\times(n+1)},&
\end{flalign*}
where $s_{jj}=2\big\lfloor\frac{d_{j}+1}{2}\big\rfloor$, $j=1,2,\ldots,n+1$.

Let $\zeta_j$ ($j=1,2,\ldots,n+1$) be the eigenvalues of  $L_S(G^r_n)$ with $\zeta_1\leqslant\zeta_2\leqslant\cdots\leqslant\zeta_{n+1}$. Then one can easily obtain that the spectrum of $G^r_n$ is   $\{\alpha_1,\alpha_2,\ldots,\alpha_{n+1},\zeta_1,\zeta_2,\ldots,\zeta_{n+1}\}$. The expressions of $\sum^{n+1}_{j=1}\frac{1}{\zeta_j}$  and $\prod^{n+1}_{j=1}\zeta_j$ are derived in the next theorem.

\begin{kkk}\label{theo41}
	For any graph $G^r_{n}\in\mathcal{G}^r_{n}$, we have
	\begin{flalign*}
		&\hspace{1cm}\sum^{n+1}_{j=1}\frac{1}{\zeta_j}=\frac{2n+r-d+16}{12},&
		&\hspace{1.8cm}\prod^{n+1}_{j=1}{\zeta_j}=2^{n+r+2d-9}\cdot3^{n-r-d+5}.&
	\end{flalign*}	
	where	$d=d_{1}+d_{{n+1}}$. 
\end{kkk}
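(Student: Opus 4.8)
The plan is to avoid any characteristic-polynomial computation entirely and instead read the spectrum of $L_S(G^r_n)$ straight off its diagonal, since $L_S(G^r_n)$ is a diagonal matrix. Thus the eigenvalues $\zeta_1,\ldots,\zeta_{n+1}$ are precisely the diagonal entries $s_{jj}=2\big\lfloor\frac{d_j+1}{2}\big\rfloor$, and because every $d_j\in\{2,3,4,5\}$ these entries can only equal $2$, $4$ or $6$ (note that $d_j=3$ and $d_j=4$ both produce $s_{jj}=4$). Both $\sum_{j=1}^{n+1}\frac1{\zeta_j}$ and $\prod_{j=1}^{n+1}\zeta_j$ therefore depend only on the multiplicities of $2$, $4$, $6$ among the $s_{jj}$, i.e. on a degree census of the $n+1$ vertices of $V_1$. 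This already explains why the answer can depend on $G^r_n$ only through $n$, $r$ and $d=d_1+d_{n+1}$: the position of a deleted internal edge is irrelevant, since every internal deletion merely turns one degree $5$ into a degree $4$, and hence one eigenvalue $6$ into a $4$.

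First I would separate the $n+1$ vertices of $V_1$ into the two boundary vertices $1$ and $n+1$, of degree $3$ in $G_n$, and the $n-1$ internal vertices $2,\ldots,n$, of degree $5$ in $G_n$. Deleting the vertical edge at position $j$ lowers $d_j$ by exactly one, so in $G^r_n$ a boundary vertex contributes $s_{jj}=4$ if its edge survives ($d_j=3$) and $s_{jj}=2$ if it is removed ($d_j=2$), whereas an internal vertex contributes $s_{jj}=6$ if its edge survives ($d_j=5$) and $s_{jj}=4$ if it is removed ($d_j=4$).

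Next I would encode $r$ and $d$ as multiplicities. Letting $p\in\{0,1,2\}$ denote the number of deleted boundary edges, the identity $d=6-p$ gives $p=6-d$, so exactly $6-d$ of the $\zeta_j$ equal $2$; the remaining $r-p=r+d-6$ deletions fall on internal edges, leaving $n+5-r-d$ internal vertices of degree $5$ (eigenvalue $6$) and placing the $d-4$ surviving boundary vertices together with the $r+d-6$ shrunken internal vertices at eigenvalue $4$, for total multiplicity $r+2d-10$. A check that $(6-d)+(r+2d-10)+(n+5-r-d)=n+1$, together with nonnegativity over the admissible range $0\leqslant r\leqslant n+1$, $d\in\{4,5,6\}$, confirms the census. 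The two claimed formulas then follow by substituting into $\sum_{j=1}^{n+1}\frac1{\zeta_j}=\frac{6-d}{2}+\frac{r+2d-10}{4}+\frac{n+5-r-d}{6}$ and $\prod_{j=1}^{n+1}\zeta_j=2^{\,6-d}\cdot4^{\,r+2d-10}\cdot6^{\,n+5-r-d}$, clearing denominators in the former and collecting prime powers (via $4=2^2$, $6=2\cdot3$) in the latter. The only real care needed—rather than a genuine obstacle—is keeping the boundary deletions correctly coupled to $d$ and not conflating the two sources of the eigenvalue $4$ (surviving degree-$3$ corners versus deleted degree-$4$ interiors), since an off-by-one in either count propagates directly into the coefficient of $d$ in the final expression.
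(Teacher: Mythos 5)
Your proposal is correct and follows essentially the same route as the paper: since $L_S(G^r_n)$ is diagonal, the $\zeta_j$ are just the entries $2\big\lfloor\frac{d_j+1}{2}\big\rfloor\in\{2,4,6\}$, and everything reduces to counting their multiplicities in terms of $n$, $r$ and $d$; the only difference is that you unify the paper's three cases on $(d_1,d_{n+1})$ into a single census via $p=6-d$, which is a presentational improvement rather than a new idea. Note that your computation (like the paper's own proof and its use in the next theorem) yields $\sum_{j=1}^{n+1}\frac{1}{\zeta_j}=\frac{2n+r-2d+16}{12}$, so the coefficient $-d$ in the printed statement is a typo for $-2d$.
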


\begin{proof}
	It is easy to observe that the eigenvalues of $L_S(G^r_n)$ are $s_{11},s_{22},\ldots,s_{n+1,n+1}$, where $s_{jj}=2\big\lfloor\frac{d_{j}+1}{2}\big\rfloor$, $j=1,2,\ldots,n+1$. \\
	
	$\mathbf{Case~1}~~~$ If $d_{1}=d_{{n+1}}=2$, then the edges $11^{\prime}$ and ${(n+1)}{(n+1)^{\prime}}$ are deleted, which implies that $r\geqslant2$. In this case,  we can get that  $\zeta_1=\zeta_2=2$, $\zeta_3=\zeta_4=\cdots=\zeta_{r}=4$ and $\zeta_{r+1}=\zeta_{r+2}=\cdots=\zeta_{n+1}=6$. Hence, we have
	\begin{flalign*}
		&\hspace{1cm}\sum^{n+1}_{j=1}\frac{1}{\zeta_j}=\frac{2}{2}+\frac{r-2}{4}+\frac{n+1-r}{6}=\frac{2n+r+8}{12},&
		&\hspace{1cm}\prod^{n+1}_{j=1}\zeta_j=2^2\cdot4^{r-2}\cdot 6^{n+1-r}=2^{n+r-1}\cdot3^{n+1-r}.&
	\end{flalign*}
	
	$\mathbf{Case~2}~~~$ If $d_{1}=d_{{n+1}}=3$, then the edges $11^{\prime}$ and ${(n+1)}{(n+1)^{\prime}}$  are not deleted,  which implies that $r\geqslant0$. Therefore, we can obtain that $\zeta_1=\zeta_2=\cdots=\zeta_{r+2}=4$ and $\zeta_{r+3}=\zeta_{r+4}=\cdots=\zeta_{n+1}=6$. Thus, we have
	\begin{flalign*}
		&\hspace{1cm}\sum^{n+1}_{j=1}\frac{1}{\zeta_j}=\frac{r+2}{4}+\frac{n-r-1}{6}=\frac{2n+r+4}{12},&
		&\hspace{1cm}\prod^{n+1}_{j=1}\zeta_j=4^{r+2}\cdot 6^{n-1-r}=2^{n+r+3}\cdot3^{n-1-r}.&
	\end{flalign*}

	$\mathbf{Case~3}~~~$ If $d_{1}=2$ and $d_{{n+1}}=3$, or  $d_{1}=3$ and $d_{{n+1}}=2$,  that is the edge  $11^{\prime}$ or ${(n+1)}{(n+1)^{\prime}}$  is deleted, thus we have $r\geqslant1$. One can immediately get that $\zeta_1=2$,  $\zeta_2=\zeta_3=\cdots=\zeta_{r+1}=4$ and $\zeta_{r+2}=\zeta_{r+3}=\cdots=\zeta_{n+1}=6$. In this case, we can obtain that
	\begin{flalign*}
		&\hspace{1cm}\sum^{n+1}_{j=1}\frac{1}{\zeta_j}=\frac{1}{2}+\frac{r}{4}+\frac{n-r}{6}=\frac{2n+r+6}{12},&
		&\hspace{1cm}\prod^{n+1}_{j=1}\zeta_j=2\cdot4^{r}\cdot 6^{n-r}=2^{n+r+1}\cdot3^{n-r}.&
	\end{flalign*}
	
	Above all, we have
	\begin{flalign*}
		&\hspace{1cm}\sum^{n+1}_{j=1}\frac{1}{\zeta_j}=\frac{2n+r-2d+16}{12},&
		&\hspace{1.6cm}\prod^{n+1}_{j=1}{\zeta_j}=2^{n+r+2d-9}\cdot3^{n-r-d+5}.&
	\end{flalign*}
	
	This completes the proof.
\end{proof} 

\begin{kkk}
	For any graph $G^r_{n}\in\mathcal{G}^r_{n}$, we have
	\begin{flalign}\label{eq411}
		&\hspace{1cm}K\!f(G^r_{n})=\frac{(n+1)(n^2+4n+r-2d+16)}{6},&
	\end{flalign}
	where	$d=d_{1}+d_{{n+1}}$.
\end{kkk}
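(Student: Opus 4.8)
The plan is to mirror exactly the argument used for $K\!f(G_n)$ in Section~3.1, now feeding in the spectral data collected for the more general graph $G^r_n$. Since $|V_{G^r_n}|=2(n+1)$ and the Laplacian spectrum of $G^r_n$ is $\{\alpha_1,\dots,\alpha_{n+1},\zeta_1,\dots,\zeta_{n+1}\}$, I would invoke \eqref{eq11} to write
\begin{flalign*}
	&\hspace{1cm}K\!f(G^r_n)=2(n+1)\sum_{\mu\ne 0}\frac{1}{\mu},&
\end{flalign*}
where the sum ranges over the $2n+1$ nonzero Laplacian eigenvalues. The task then reduces to splitting this sum into its $\alpha$-part and $\zeta$-part and identifying the unique zero eigenvalue.

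The key structural observation, and the step I would foreground, is that $L_A(G^r_n)$ is \emph{independent of} $r$ and $d$: the relations $l_{11}+t_{11}=l_{n+1,n+1}+t_{n+1,n+1}=2$ and $l_{ii}+t_{ii}=4$ for $2\leqslant i\leqslant n$ force $L_A(G^r_n)$ to coincide \emph{verbatim} with the matrix $L_A$ of $G_n$. Consequently the eigenvalues $\alpha_i$ are the same as in Section~3.1, so \eqref{eq3111} applies unchanged and gives $\sum_{i=2}^{n+1}\frac{1}{\alpha_i}=\frac{n(n+2)}{12}$, with $\alpha_1=0$. I would also record that every $\zeta_j=2\big\lfloor\frac{d_j+1}{2}\big\rfloor\geqslant 2>0$, which guarantees that $\alpha_1$ is the \emph{only} zero eigenvalue; this justifies keeping the full sum $\sum_{j=1}^{n+1}\frac{1}{\zeta_j}$ and dropping only the single $\alpha_1$ term.

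With the bookkeeping settled, I would combine the two pieces using Theorem~\ref{theo41}, namely $\sum_{j=1}^{n+1}\frac{1}{\zeta_j}=\frac{2n+r-2d+16}{12}$, to obtain
\begin{flalign*}
	\hspace{1cm}K\!f(G^r_n)&=2(n+1)\bigg(\sum_{i=2}^{n+1}\frac{1}{\alpha_i}+\sum_{j=1}^{n+1}\frac{1}{\zeta_j}\bigg)\\
	&=2(n+1)\bigg(\frac{n(n+2)}{12}+\frac{2n+r-2d+16}{12}\bigg)\\
	&=\frac{(n+1)(n^2+4n+r-2d+16)}{6},&
\end{flalign*}
which is \eqref{eq411}. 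I do not expect a genuine obstacle here, since all the analytic work has already been done: the reciprocal sum for $L_S(G^r_n)$ is handled by the three-case analysis of Theorem~\ref{theo41}, and the reciprocal sum for $L_A(G^r_n)$ is inherited from $G_n$. The only point requiring care — and the one I would state explicitly to avoid an off-by-one error — is confirming that deleting vertical edges perturbs only the symmetric block $L_S$ while leaving $L_A$ (and hence the location of the zero eigenvalue) untouched; once that invariance is noted, the derivation is a direct substitution.
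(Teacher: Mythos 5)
Your proof is correct and follows essentially the same route as the paper: apply \eqref{eq11} with $|V_{G^r_n}|=2(n+1)$, use \eqref{eq3111} for the $\alpha$-part and Theorem \ref{theo41} for the $\zeta$-part, and simplify. Your added remarks — that $L_A(G^r_n)$ coincides with $L_A$ of $G_n$ so the $\alpha_i$ are unchanged, and that all $\zeta_j\geqslant 2$ so $\alpha_1$ is the only zero eigenvalue — are justifications the paper leaves implicit, and you correctly use $\tfrac{2n+r-2d+16}{12}$ (as derived in the proof of Theorem \ref{theo41}) rather than the misprinted $\tfrac{2n+r-d+16}{12}$ in that theorem's statement.
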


\begin{proof}
	Since $|V_{G^r_{n}}|=2(n+1)$, by  \eqref{eq11}, \eqref{eq3111} and Theorem \ref{theo41},  we have
	\begin{flalign*}
		\hspace{1cm}K\!f(G^r_{n})&=2(n+1)\bigg(\sum^{n+1}_{i=2}\frac{1}{\alpha_i}+\sum^{n+1}_{j=1}\frac{1}{\zeta_j}\bigg)\nonumber&\\
		&=2(n+1)\bigg[\frac{n(n+2)}{12}+\frac{2n+r-2d+16}{12}\bigg]\nonumber&\\
		&=\frac{(n+1)(n^2+4n+r-2d+16)}{6}.&
	\end{flalign*}	
	This completes the proof.
\end{proof}

\begin{kkk}
	For any graph $G^r_{n}\in\mathcal{G}^r_{n}$, we have
	\begin{flalign*}
		\label{4.3}
		&\hspace{1cm}\lim_{n\to\infty}\frac{K\!f(G^r_{n})}{W(G^r_{n})}=\frac{1}{4}.&
	\end{flalign*}
\end{kkk}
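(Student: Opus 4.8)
The plan is to combine the exact formula for $K\!f(G^r_n)$ in \eqref{eq411} with a sharp leading-order estimate of $W(G^r_n)$. From \eqref{eq411}, since $0\leqslant r\leqslant n+1$ and $4\leqslant d\leqslant 6$ (each of $d_1,d_{n+1}$ equals $2$ or $3$), the term $(n+1)(r-2d+16)$ is $O(n^2)$, so $K\!f(G^r_n)=\tfrac{1}{6}n^3+O(n^2)$. Hence it suffices to prove $W(G^r_n)=\tfrac{2}{3}n^3+O(n^2)$ uniformly over all configurations, for then the ratio converges to $(1/6)/(2/3)=\tfrac14$.

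The key structural fact I would establish is that distances in $G^r_n$ are governed almost entirely by column indices, irrespective of which vertical edges are deleted. Label each vertex by its column $i\in\{1,\dots,n+1\}$, so that $i$ and $i'$ share column $i$. Every horizontal and every crossed edge joins columns differing by exactly $1$, and every vertical edge joins two vertices of the same column; thus any edge of $G^r_n$ changes the column index by at most $1$, which already gives the lower bound $d_{G^r_n}(u,v)\geqslant|i-j|$ whenever $u$ lies in column $i$ and $v$ in column $j$. For the matching upper bound I would invoke the edge-set inclusions $E(G^{n+1}_n)\subseteq E(G^r_n)\subseteq E(G_n)$: because the fully deleted graph $G^{n+1}_n$ is a spanning subgraph of $G^r_n$, we have $d_{G^r_n}(u,v)\leqslant d_{G^{n+1}_n}(u,v)$, and in $G^{n+1}_n$ one exhibits an explicit path of length exactly $|i-j|$ between any two vertices in distinct columns (travel along one row and switch rows at most once through a crossed edge). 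Combining the bounds yields $d_{G^r_n}(u,v)=|i-j|$ for every pair in distinct columns, while for the $n+1$ same-column pairs $\{i,i'\}$ the distance is $1$ if $ii'$ survives and $2$ if $ii'$ is deleted.

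With the distances pinned down, the Wiener index splits as a cross-column part plus a same-column part. The same-column contribution lies between $n+1$ and $2(n+1)$, hence is $O(n)$; in particular the entire configuration-dependence of $W(G^r_n)$ is confined to this negligible term. The cross-column contribution is configuration-free: partitioning the $2(n+1)$ vertices into $n+1$ columns of two, each unordered pair of columns $\{i,j\}$ contributes four vertex pairs at distance $|i-j|$, so this part equals $4\sum_{1\leqslant i<j\leqslant n+1}(j-i)$. Evaluating the inner sum gives $\tfrac{2}{3}(n+1)^3+O(n^2)=\tfrac{2}{3}n^3+O(n^2)$. Therefore $W(G^r_n)=\tfrac{2}{3}n^3+O(n^2)$, and dividing by $K\!f(G^r_n)=\tfrac{1}{6}n^3+O(n^2)$ yields the limit $\tfrac14$.

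The main obstacle is the distance identity of the second paragraph: one must guarantee that deleting vertical edges never inflates a cross-column distance, and the cleanest way to secure this is the subgraph sandwich, which reduces the whole matter to constructing a single length-$|i-j|$ path in the most-deleted graph $G^{n+1}_n$. Once that identity is in place, the two summations are routine and the configuration-dependent piece is provably of lower order, so the limit follows without needing a closed form for $W(G^r_n)$ itself.
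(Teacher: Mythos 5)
Your proof is correct and follows essentially the same route as the paper: both arguments rest on the fact that deleting vertical edges leaves every cross-column distance equal to $|i-j|$ and only perturbs the $n+1$ same-column distances, so $W(G^r_n)$ agrees with $W(G_n)$ up to an $O(n)$ term, after which the limit is read off from the exact formula for $K\!f(G^r_n)$. The paper merely asserts this distance fact (``one can easily check that $W(G^r_{n})=W(G_n)+r$''), whereas your sandwich $E(G^{n+1}_n)\subseteq E(G^r_n)\subseteq E(G_n)$ together with the column-index lower bound actually proves it, and yields the paper's exact identity as a corollary.
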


\begin{proof}
	For any graph $G^r_{n}\in\mathcal{G}^r_{n}$, one can easily check  that $W(G^r_{n})=W(G_n)+r$. By \eqref{eq388}, we have
	\begin{flalign*}
		&\hspace{1cm}W(G^r_{n})=\frac{2n^3+7n^2+6n+3r+3}{3}.&
	\end{flalign*}		
	Together with \eqref{eq411}, our result follows immediately.
\end{proof}

\begin{kkk}
	For any graph $G^r_{n}\in\mathcal{G}^r_{n}$, we have
	\begin{flalign*}
		&\hspace{1cm}\tau(G^r_{n})=2^{2n+r+2d-10}\cdot3^{n-r-d+5},&
	\end{flalign*}
	where	$d=d_{1}+d_{{n+1}}$.
\end{kkk}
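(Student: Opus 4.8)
The plan is to apply the matrix-tree formula of Theorem \ref{theo22} together with the Laplacian spectrum decomposition already set up for $G^r_n$ at the start of Section 4. Since $G^r_n$ has $2(n+1)$ vertices, Theorem \ref{theo22} gives $\tau(G^r_n)=\frac{1}{2(n+1)}\prod\mu$, where the product ranges over all nonzero Laplacian eigenvalues of $G^r_n$. By the decomposition, the full spectrum of $L(G^r_n)$ is $\{\alpha_1,\alpha_2,\ldots,\alpha_{n+1}\}\cup\{\zeta_1,\zeta_2,\ldots,\zeta_{n+1}\}$, where the $\alpha_i$ are the eigenvalues of $L_A(G^r_n)=L_A=2L(P_{n+1})$ and the $\zeta_j$ are the eigenvalues of the diagonal matrix $L_S(G^r_n)$.

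First I would isolate the zero eigenvalue. Because $L_S(G^r_n)$ is diagonal with entries $s_{jj}=2\lfloor\frac{d_j+1}{2}\rfloor\in\{2,4,6\}$, it is nonsingular, so every $\zeta_j$ is strictly positive; meanwhile $L_A=2L(P_{n+1})$ has a one-dimensional kernel spanned by the all-ones vector. Hence $L(G^r_n)$ has nullity exactly one (equivalently, $G^r_n$ is connected for every $0\leqslant r\leqslant n+1$), and its unique zero eigenvalue is $\alpha_1=0$. Consequently the product of all nonzero eigenvalues factors cleanly as $\prod_{i=2}^{n+1}\alpha_i\cdot\prod_{j=1}^{n+1}\zeta_j$, with nothing erroneously dropped or double-counted.

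Next I would substitute the two products that are already available. From \eqref{eq3111} we have $\prod_{i=2}^{n+1}\alpha_i=(n+1)2^n$, and from Theorem \ref{theo41} we have $\prod_{j=1}^{n+1}\zeta_j=2^{n+r+2d-9}\cdot3^{n-r-d+5}$ with $d=d_1+d_{n+1}$. Plugging into the matrix-tree formula and cancelling the factor $(n+1)$ yields
\[
\tau(G^r_n)=\frac{1}{2(n+1)}\cdot(n+1)2^n\cdot 2^{n+r+2d-9}\cdot 3^{n-r-d+5}=2^{2n+r+2d-10}\cdot 3^{n-r-d+5},
\]
which is exactly the claimed formula.

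There is no genuine obstacle here, since every ingredient has been established: the proof reduces to a single line of arithmetic once the products are inserted. The only point deserving explicit care is the second paragraph, namely verifying that $\alpha_1$ is the sole zero eigenvalue so that the matrix-tree product is taken over precisely $\{\alpha_2,\ldots,\alpha_{n+1},\zeta_1,\ldots,\zeta_{n+1}\}$; this is immediate from the nonsingularity of the diagonal block $L_S(G^r_n)$, which also records the fact that deleting any number of vertical edges leaves $G^r_n$ connected.
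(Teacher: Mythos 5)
Your proof is correct and follows essentially the same route as the paper: apply Theorem \ref{theo22} with $|V_{G^r_n}|=2(n+1)$, use the spectrum decomposition into the $\alpha_i$ and $\zeta_j$, and multiply the two products from \eqref{eq3111} and Theorem \ref{theo41}. The only (harmless) difference is that you plug in the unified product $\prod_{j=1}^{n+1}\zeta_j=2^{n+r+2d-9}\cdot3^{n-r-d+5}$ directly, whereas the paper redoes the three-case analysis of Theorem \ref{theo41} inside this proof; your explicit check that $\alpha_1$ is the unique zero eigenvalue (via nonsingularity of $L_S(G^r_n)$) is a point the paper leaves implicit.
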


\begin{proof}
	Since $|V_{G^r_{n}}|=2(n+1)$, by  Theorem \ref{theo22}, \eqref{eq3111} and Theorem \ref{theo41},  we have\\
	
	$\mathbf{Case~1}~~~$ If $d_{1}=d_{{n+1}}=2$, then	
	\begin{flalign*}
		\hspace{1cm}\tau(G^r_{n})&=\frac{1}{2(n+1)}\bigg(\prod^{n+1}_{i=2}\alpha_i\cdot\prod^{n+1}_{j=1}\zeta_j\bigg)\nonumber&\\
		&=\frac{1}{2(n+1)}\bigg[(n+1)2^n\cdot2^{n+r-1}\cdot3^{n+1-r}\bigg]\nonumber&\\
		&=2^{2n+r-2}\cdot3^{n+1-r}.&
	\end{flalign*}	
	
	$\mathbf{Case~2}~~~$ If $d_{1}=d_{{n+1}}=3$, then 	
	\begin{flalign*}
		\hspace{1cm}\tau(G^r_{n})&=\frac{1}{2(n+1)}\bigg(\prod^{n+1}_{i=2}\alpha_i\cdot\prod^{n+1}_{j=1}\zeta_j\bigg)\nonumber&\\
		&=\frac{1}{2(n+1)}\bigg[(n+1)2^n\cdot2^{n+r+3}\cdot3^{n-1-r}\bigg]\nonumber&\\
		&=2^{2n+r+2}\cdot3^{n-1-r}.&
	\end{flalign*}
	
	$\mathbf{Case~3}~~~$ If $d_{1}=2$ and $d_{{n+1}}=3$, or  $d_{1}=3$ and $d_{{n+1}}=2$,  then
	\begin{flalign*}
		\hspace{1cm}\tau(G^r_{n})&=\frac{1}{2(n+1)}\bigg(\prod^{n+1}_{i=2}\alpha_i\cdot\prod^{n+1}_{j=1}\zeta_j\bigg)\nonumber&\\
		&=\frac{1}{2(n+1)}\bigg[(n+1)2^n\cdot2^{n+r+1}\cdot3^{n-r}\bigg]\nonumber&\\
		&=2^{2n+r}\cdot3^{n-r}.&
	\end{flalign*} 
	
	Therefore, we have
	\begin{flalign*}
		&\hspace{1cm}\tau(G^r_{n})=2^{2n+r+2d-10}\cdot3^{n-r-d+5},&
	\end{flalign*}
	as desired.
\end{proof}

\section*{Acknowledgements} The authors would like to express their sincere gratitude to all the referees for their careful reading and insightful suggestions.




\begin{thebibliography}{00}
	
	\bibitem{ande} W.N. Anderson, T.D. Morley, Eigenvalues of the Laplacian of a graph, Linear Multilinear Algebra 18 (1985) 141-145.
	
	
	
	\bibitem{001} E. Bendito, A. Carmona, A.M. Encinas, J.M. Gesto, Aformula for the Kirchhoff index, Int. J. Quantum Chem. 108 (2008) 1200-1206.
	
	\bibitem{002} M. Bianchi, A. Cornaro, J.L. Palacios, A. Torriero, Bounds for the Kirchhoff index via majorization techniques, J. Math. Chem. 51 (2013) 569-587.
	
	\bibitem{003} S.B. Bozkurt, D. Bozkurt, On the sum of powers of normalized Laplacian eigenvalues of graphs, MATCH, Commun. Math. Chem. 68 (2012) 917-930.
	
	\bibitem{004} A. Carmona, A.M. Encinas, M. Mitjana, Effective resistances for ladder-like chains, Int. J. Quantum Chem. 114 (2014) 1670-1677.
	
	\bibitem{005} H.Y. Chen, F.J. Zhang, Resistance distance and the normalized Laplacian spectrum, Discrete. Appl. Math. 155 (2007)	654-661.
	
	\bibitem{006} F.R.K. Chung, Spectral Graph Theory, American Mathematical Society Providence, RI, 1997.
	
	\bibitem{cle1} G.P. Clemente, A. Cornaro, New bounds for the sum of powers of the normalized Laplacian eigenvalues of graphs, Ars Mathematica Contemporanea, 11 (2016) 403-413.
	
	\bibitem{cle2} G.P. Clemente, A. Cornaro, Computing lower bounds for the Kirchhoff index via majorization techniques, MATCH Commun. Math. Comput. Chem. 73 (2015) 175-193.
	
	
	\bibitem{cin} Z. Cinkir, Effective resistances and Kirchhoff index of ladder graphs, J. Math. Chem. 54 (2016) 955-966.
	
	\bibitem{007} L.H. Feng, I. Gutman, G.H. Yu, Degree Kirchhoff index of unicyclic graphs, MATCH Commun. Math. Comput. Chem. 69 (2013) 629-648.
	
	\bibitem{000e} I. Gutman, Selected properties of the schultz molecular topological index, J. Chem. Inf. Comput. Sci. 34 (1994) 1087-1089.
	
	\bibitem{he} C.L. He, S.C. Li, W.J. Luo, L.Q. Sun, Calculating the normalized Laplacian spectrum and the number of spanning trees of linear pentagonal chains, J. Comput. Appl. Math. 344 (2018) 381-393.
	
	
	\bibitem{008} J. Huang, S.C. Li, On the normalized Laplacian spectrum, degree-Kirchhoff index and spanning trees of graphs, Bull. Aust. Math. Soc. 91 (2015) 353-367.
	
	\bibitem{009} J. Huang, S.C. Li, L.Q. Sun, The normalized Laplacians, degree-Kirchhoff index and spanning trees of the linear hexagonal chains, Discrete. Appl. Math. 207 (2016) 67-79.
	
	\bibitem{010} J. Huang, S.C. Li, X.C. Li, The normalized Laplacian, degree-Kirchhoff index and spanning trees of the linear polyomino chains, Appl. Math. Comput. 289 (2016) 324-334.
	
	\bibitem{011} D.J. Klein, O. Ivanciuc, Graph cyclicity, excess-conductance, and resistance deficit, J. Math. Chem. 30 (2001) 271-287.
	
	\bibitem{012} D.J. Klein, I. Lukovits, I. Gutman, On the definition of the hyper-wiener index for cycle-containing structures, J. Chem. Inf. Comput. Sci. 35 (1995) 50-52.
	
	\bibitem{013} D.J. Klein, M. Randi$\acute{c}$, Resistance distance, J. Math. Chem. 12 (1993) 81-95.
	
	\bibitem{014} J.B. Liu, X.F. Pan, J. Cao, F.F. Hu, A note on some physical and chemical indices of clique-inserted lattices, J. Stat. Mech. Theory Exp. 6 (2014) 8. P06006.
	
	\bibitem{015} L. Lov$\acute{a}$sz, Random walks on graphs: a survey. in combinatorics, paul er$\ddot{a}$s is eighty, Bolyai Soc. Math. Stud. 2 (1) (1993) 1-46.
	
	\bibitem{016} J.L. Palacios, Foster's formulas via probability  and the Kirchhoff index, Methodol. Comput. Appl. Probab. 6 (2014) 381-387.	
	
	\bibitem{017} J.L. Palacios, Closed-form formulas for Kirchhoff index , Int. J. Quantum Chem. 81 (2001) 135-140.	
	
	
	\bibitem{018} J.L. Palacios, J.M. Renom, Another look at the degree-Kirchhoff index, Int. J. Quantum Chem. 111 (2011) 3453-3455.
	
	\bibitem{pan} Y.G. Pan, J.P. Li, Kirchhoff index, multiplicative degree-Kirchhoff index and spanning trees of the linear crossed hexagonal chains, Int. J. Quantum Chem. 2018;e25787.
	
	\bibitem{peng} Y. Peng, S. Li, On the Kirchhoff index and the number of spanning trees of linear phenylenes, MATCH Commun. Math. Comput. Chem. 77 (2017) 765-780.
	
	\bibitem{019} W.Z. Wang, D. Yang, Y.F. Luo, The Laplacian polynomial and Kirchhoff index of graphs derived from regular graphs, Discret. Appl. Math. 161 (2013) 3063-3071.
	
	\bibitem{ywang} Y. Wang, W.W. Zhang, Kirchhoff index of linear pentagonal chains, Int. J. Quantum Chem. 110 (2018) 1594-1604.
	
	
	
	\bibitem{020} H. Wiener, Structural determination of paraffin boiling points, J. Am. Chem. Soc. 69 (1947) 17-20.
	
	\bibitem{021} P.C. Xie, Z.Z. Zhang, F. Comellas, On the spectrum of the normalized laplacian of iterated triangulations of graphs, Appl. Math. Comput. 273 (2016) 1123-1129.	
	
	\bibitem{022} Y.J. Yang, H.P. Zhang, Kirchhoff index of linear hexagonal chains, Int. J. Quantum Chem. 108 (2008) 503-512.
	
	\bibitem{023} Y.J. Yang, H.P. Zhang, D.J. Klein, New Nordhaus-Gaddum-type results for the Kirchhoff index, J. Math. Chem. 49 (2011) 1587-1598.
	
	\bibitem{024}  H.P. Zhang, Y.J. Yang, Resistance distance and Kirchhoff index in a circulant graphs, Int. J. Quantum Chem. 107 (2007) 330-339.
	
	\bibitem{025} H.P. Zhang, Y.J. Yang, C.W. Li, Kirchhoff index of composite graphs, Discret. Appl. Math. 157 (2009) 2918-2927.
	
	\bibitem{026} B. Zhou, N. Trinajsti$\acute{c}$, On the resistance-distance and Kirchhoff index, J. Math. Chem. 46 (2009) 283-289.
	
	\bibitem{zhu} Z.Z. Zhu, J.B. Liu, The normalized Laplacian, degree-Kirchhoff index and the spanning tree numbers of generalized phenylenes, Discret. Appl. Math. http://doi.org/10.1016/j.dam.2018.06.026.
	
	
	
	
	
	
	
	
	
	
	
	
	
	
	
	
	
	
	
	
	
	
	
	
	
	
	
	
	
	
	
	
	
	
	
	
	
	
	
	
	
	
	
	
	
	
	
	
	
	
	
	
	
	
	
	
	
	
	
	
	
	
	
	
\end{thebibliography}


\end{document}